\documentclass[11pt,reqno]{amsart}

\usepackage{amsthm,amsmath,amssymb,amsfonts,amscd,mathtools}
\RequirePackage{array}
\RequirePackage{cancel}
\RequirePackage{enumerate}
\RequirePackage{mathtools}
\RequirePackage{tikz-cd}
\usetikzlibrary{positioning}
\usepackage{dynkin-diagrams} % CTAN: dynkin-diagrams
\RequirePackage{geometry}
\RequirePackage{comment}
\usepackage[colorlinks = true,
            linkcolor = blue,
            urlcolor  = blue,
            citecolor = blue]{hyperref}
\usepackage{cleveref}

\allowdisplaybreaks

\newcommand{\R}{\mathbb R}
\newcommand{\C}{\mathbb C}

\newcommand{\D}{\mathbb {D}}
\renewcommand{\P}{\mathbb {P}}

\newcommand{\G}{\mathsf {G}}
\renewcommand{\H}{\mathsf {H}}

\newcommand{\g}{\mathfrak g}

\renewcommand{\b}{\mathfrak b}

\newcommand{\q}{\mathfrak q}
\newcommand{\qj}{\mathfrak q_J}
\newcommand{\h}{\mathfrak h}
\renewcommand{\sl}{\mathfrak {sl}}

\newcommand{\Q}{\mathcal {Q}}

\newcommand{\J}{\mathcal J}
\newcommand{\T}{\mathcal T}
\newcommand{\M}{\mathcal M}
\newcommand{\CS}{\mathcal S}

\renewcommand{\i}{\operatorname{i}}
\newcommand{\Ad}{\operatorname{Ad}}
\newcommand{\ad}{\operatorname{ad}}
\newcommand{\Id}{\operatorname{Id}}
\newcommand{\tr}{\operatorname{tr}}
\newcommand{\Aut}{\operatorname{Aut}}

\renewcommand{\Im}{\operatorname{Im}}

\newcommand{\rank}{\operatorname{rank}}
\newcommand{\I}{\operatorname{I}}
\newcommand{\II}{\operatorname{II}}
\newcommand{\III}{\operatorname{III}}
\newcommand{\SL}{\mathsf{SL}}
\newcommand{\PSL}{\mathsf{PSL}}

\newcommand{\Gr}{\operatorname{Gr}}
\newcommand{\St}{\operatorname{St}}
\newcommand{\End}{\operatorname{End}}
\newcommand{\Hom}{\operatorname{Hom}}
\newcommand{\quo}{{\operatorname{quo}}}

\newcommand{\cand}{{\operatorname{cand}}}
\newcommand{\flip}{{\operatorname{flip}}}
\newcommand{\std}{{\operatorname{std}}}

\newcommand{\rest}[2]{\left.{#1}\right|_{#2}}

\theoremstyle{plain}
\newtheorem{theorem}{Theorem}[section]
\newtheorem{proposition}[theorem]{Proposition}
\newtheorem{lemma}[theorem]{Lemma}
\newtheorem{corollary}[theorem]{Corollary}

\theoremstyle{definition}
\newtheorem{example}[theorem]{Example}

\theoremstyle{remark}
\newtheorem{remark}[theorem]{Remark}

\title{Invariant complex structures on $\SL(2,\C)$}
\author{Joseph Kwong}
\address[J. Kwong]{The University of Queensland, Brisbane, QLD 4072, Australia}
\email{j.kwong@student.uq.edu.au}

\author{Asia Mainenti}
\address[A. Mainenti]{Institute of Mathematics “Simion Stoilow” of the Romanian Academy, 21 Calea Grivitei, 010702 Bucharest, Romania}
\email{asia.mainenti@imar.ro; asia.mainenti@gmail.com}

\keywords{simple Lie groups, invariant complex structures, automorphism group, special Hermitian metrics}
\subjclass[2020]{Primary: 53C55; Secondary: 22E46, 17B20}

\begin{document}

\begin{abstract}
    We classify the left-invariant complex structures on the real Lie group $\SL(2,\C)$   up to Lie group automorphism. The classification consists of
    the bi-invariant complex structure, a family parameterised by the unit disk, and one new, non-regular complex structure. The topology  of the space of left-invariant complex structures (modulo automorphisms) is not Hausdorff. Finally, we show that the corresponding complex manifolds always admit left-invariant balanced metrics.
\end{abstract}

\maketitle

\section{Introduction}

An integrable complex structure $J$ on a real Lie group $\G$ is called \emph{left-invariant} if every left translation of $\G$ is a biholomorphism. Every even-dimensional semisimple Lie group admits a left-invariant complex structure \cite{morimoto_1956}. In real dimension six, the classification of left-invariant complex structures on semisimple Lie groups follows from \cite{snow_1986}, with the exception of $\SL(2,\C)$ (and its quotient $\PSL(2,\C) = \SL(2,\C) / \{\pm I_2\}$). The following theorem settles this remaining case:

\begin{theorem}
    \label{main theorem}
    The left-invariant complex structures (up to Lie group automorphism) on the real Lie group $\SL(2,\C)$ consist of the bi-invariant complex structure, a family of regular complex structures parameterised by the open unit disk, and one non-regular complex structure.
\end{theorem}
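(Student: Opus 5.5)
A left-invariant complex structure $J$ on $\G=\SL(2,\C)$ is the same as a complex subalgebra $\q\subset\g^\C$, with $\g=\sl(2,\C)$ viewed as a real Lie algebra, such that $\g^\C=\q\oplus\bar\q$; here $\q$ is the $\i$-eigenspace of $J$. Automorphisms of $\G$ act on $\g$, so the classification reduces to classifying such $\q$ up to $\Aut(\g)$.

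We have $\g^\C\cong\sl(2,\C)\oplus\sl(2,\C)$, with conjugation $\sigma(X,Y)=(\bar Y,\bar X)$. Moreover $\Aut(\g)$ contains $\PSL(2,\C)$ acting diagonally by $(\Ad_g,\Ad_{\bar g})$, together with complex conjugation of the group, which swaps the two factors. So the plan is to classify $3$-dimensional complex subalgebras $\q\subset\sl_2\oplus\sl_2$ with $\q\cap\sigma\q=0$, up to this action.

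\textbf{Step 1: subalgebras.} Let $p_1,p_2$ be the projections of $\q$ to the two factors. Goursat's lemma, combined with the classification of subalgebras of $\sl_2$ (namely $0$, lines, the $2$-dimensional Borel up to conjugacy, and $\sl_2$), leaves the following possibilities for a $3$-dimensional $\q$:
\begin{enumerate}[(a)]
\item $\q$ is the graph of an automorphism $\phi$ of $\sl_2$;
\item $\q$ is one factor $\sl_2\oplus0$ or $0\oplus\sl_2$;
\item $\q=\b\oplus\ell$ or $\ell\oplus\b$, with $\b$ a Borel subalgebra and $\ell$ a line;
\item $\q$ is a fibred product of $\b$ and $\b'$ over a common $1$-dimensional abelian quotient.
\end{enumerate}
Case (d) means $\q=\{(a,b)\in\b\oplus\b':\chi(a)=\lambda\chi'(b)\}$, where $\chi,\chi'$ are the diagonal characters.

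\textbf{Step 2: the condition $\q\cap\sigma\q=0$.}
\begin{itemize}
\item[(b)] This gives the bi-invariant structure: $\q=\sl_2\oplus0$ and $\sigma\q=0\oplus\sl_2$, and the swap identifies the two choices.
\item[(a)] Write $\sigma\q$ as the graph of $\bar\phi^{-1}$. Transversality fails exactly when $\phi\bar\phi$ has a fixed vector, i.e. eigenvalue $1$. Since $\phi\bar\phi$ is an inner automorphism of $\sl_2$, it always fixes a nonzero vector, so these $\q$ never occur.
\item[(c)] Take $\q=\b\oplus\ell$. Then $\sigma\q=\bar\ell\oplus\bar\b$, and we need $\bar\ell\cap\b=0$ and $\ell\cap\bar\b=0$; these two conditions are equivalent. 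Using the diagonal action, we can normalise $\b$ to the upper triangular subalgebra and $\ell$ to a line transverse to $\bar\b$, which is again upper triangular. I expect this family to collapse to one orbit, or to a few orbits distinguished by whether $\ell$ is semisimple or nilpotent. One of these should be the non-regular structure.
\item[(d)] Conjugate so that $\b$ is upper triangular. Then $\b'$ is determined by a point of $\P^1$, modulo the stabiliser (the Borel subgroup acting via $\bar g$). Transversality then forces $\b'\neq\bar\b$ in general position, which normalises $\b'$ to the lower triangular subalgebra. The remaining parameter is $\lambda\in\C$, and the condition $\q\cap\sigma\q=0$ becomes an explicit inequality in $\lambda$ of the form $|\lambda|\neq1$. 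The swap automorphism (or the Weyl element) should exchange $\lambda$ and $1/\bar\lambda$ or similar, leaving a fundamental domain equal to the open unit disk. These are regular structures in the sense of Snow, since $\q$ contains a Cartan subalgebra and its root spaces.
\end{itemize}

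\textbf{Step 3: inequivalence and the main obstacle.} It remains to show that the surviving structures are pairwise inequivalent. For the bi-invariant structure, $\q$ is an ideal. Regularity, meaning whether $\q$ is normalised by a Cartan subalgebra of $\g^\C$ stable under $\sigma$, separates case (c) from case (d). Within the disk family, I would compute the stabiliser of the normal form in $\Aut(\g)$ and show that $\lambda$ is an invariant, for instance via $\chi$-eigenvalues of a canonical element.

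I expect the main obstacle to be the bookkeeping in case (c), and the precise transversality inequality and residual symmetry in (d). The risk is that some subcases degenerate to (d) or produce an extra orbit. So I would carry out the orbit computation in explicit coordinates, $\b$ upper triangular and $\ell=\C X$, and verify directly that exactly one orbit of non-regular structures survives.
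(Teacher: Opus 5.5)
Your decomposition is the same as the paper's. The Goursat cases (b), (a), (d), (c) are exactly the paper's cases $(d_1,d_2)=(3,0)$, $(3,3)$, $(2,2)$, $(2,1)$ (up to $\flip$). Your exclusion of (a), via a fixed vector of the inner automorphism $\phi\bar\phi$, is the paper's argument with $B=\overline{A}A$. The genuine gap is case (c): you leave it as an expectation, and your plan for assembling the answer goes wrong there.

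\textbf{The missing computation in (c).} With $\b=\C\{H,E\}$, the residual group is the upper triangular subgroup $B_+\subset\SL(2,\C)$. It acts on the second factor by $X\mapsto\overline{A}X\overline{A}^{-1}$, and $\overline{B_+}=B_+$. For $X\notin\b$, normalise the lower-left entry of $X$ to $1$; a unipotent element of $B_+$ then moves $X$ to $F-\det(X)E$. The diagonal torus rescales $F+zE$ with $z\neq 0$ to a multiple of $F+E$. So (c) consists of exactly two orbits:
\begin{itemize}
\item $\det X=0$, giving $\C\{H,E\}\oplus\C F=\II_0$;
\item $\det X\neq 0$, giving $\III$.
\end{itemize}

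\textbf{Two of your claims are false.}
\begin{itemize}
\item In (d) the gluing map between the abelian quotients is an isomorphism, so $\lambda\neq0$. After the identification $\lambda\leftrightarrow 1/\lambda$, (d) yields only the punctured disk $0<|\lambda|<1$. This identification is realised by $\flip$ composed with $\theta_A$, $A=\begin{pmatrix}0&1\\-1&0\end{pmatrix}$, not by $1/\bar\lambda$.
\item The centre of the disk is the nilpotent orbit of (c), and $\II_0$ is regular: it is normalised by $\C H\oplus\C H=\h^\C$ for the real Cartan subalgebra $\h=\R\{H,\i H\}$. So regularity does \emph{not} separate (c) from (d), and your Step 3 claim would fail if you tried to prove it. Without identifying $\II_0$ with the centre, your list reads ``punctured disk, an extra regular point, and a non-regular point''.
\end{itemize}
Also, $\II_\lambda$ does not contain a Cartan subalgebra of $\g^\C$; its intersection with $\C H\oplus\C H$ is a line. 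It is only normalised by one.

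\textbf{What still has to be proved.}
\begin{itemize}
\item Non-regularity of the semisimple orbit, which you only assert. The paper shows $N_{\g^\C}(\III)=N(\b)\oplus N(\C(H+F))=\III$, since Borel subalgebras and Cartan subalgebras of $\sl(2,\C)$ are self-normalising and $\det(H+F)\neq0$. Hence any $\sigma$-stable $\h^\C$ normalising $\III$ would lie in $\III\cap\sigma\III=0$.
\item Inequivalence within the disk. You do not need a stabiliser computation: for $X=a(H,\lambda H)+b(E,0)+c(0,F)$, the eigenvalues of $\ad_X|_{\II_\lambda}$ are $0,2a,-2a\lambda$. These are invariants of the abstract Lie algebra and recover $\lambda\in\D$.
\item Passing from $\Aut_\R(\g)$-equivalence back to $\Aut(\G)$-equivalence: this uses that $\SL(2,\C)$ is simply connected.
\end{itemize}
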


Here, a left-invariant complex structure is called \textit{regular} if it is right-invariant with respect to a Cartan subgroup $\H$ of $\G$ (see Section \ref{subsection_regular}). The bi-invariant complex structure and the regular family already appear in \cite{snow_1986}; Theorem \ref{main theorem} asserts the existence and uniqueness (up to automorphism) of a non-regular complex structure.

We note that the same classification holds for $\PSL(2,\C)$, because  every Lie group automorphism of $\SL(2,\C)$ descends to $\PSL(2,\C)$ (see Remark \ref{remark_group_automorphism}).

Topologically, the quotient space is the disk adjoined with two isolated points: the bi-invariant complex structure and the non-regular complex structure (see Theorem \ref{theorem topology}). However, the quotient space is not Hausdorff: every neighbourhood of the origin of the disk contains the non-regular complex structure.

A Hermitian metric $g$ on a complex manifold $(M^{2n}, J)$ is called \textit{balanced} if the $2$-form $\omega (\cdot,\cdot) := g(J\cdot,\cdot)$ satisfies $d \omega^{n-1} = 0$, and is called \textit{pluriclosed} if $\partial \overline \partial \omega = 0$. 
\begin{theorem}
    \label{theorem hermitian metrics}
    For any left-invariant complex structure $J$ and any cocompact lattice $\Gamma$ of $\SL(2,\C)$, the compact complex manifold $(\Gamma \backslash \SL(2,\C), J)$ admits a balanced metric. 
\end{theorem}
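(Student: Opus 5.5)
By Theorem \ref{main theorem}, it suffices to produce a left-invariant balanced metric for one representative of each automorphism class. Pulling such a metric back by the automorphism gives a left-invariant balanced metric for every other structure in the class. Any left-invariant Hermitian metric on $\SL(2,\C)$ descends to $\Gamma\backslash\SL(2,\C)$, because $\Gamma$ acts by left translations. The balanced condition is local, so it descends as well. The problem is therefore purely algebraic: for each $J$ in the classification, find a $J$-Hermitian inner product on $\g=\sl(2,\C)$ (viewed as a real $6$-dimensional Lie algebra) whose fundamental form $\omega$ satisfies $d(\omega^2)=0$. Here $d$ is the Chevalley--Eilenberg differential. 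Since $\omega^2 = 2\,{*}\omega$ up to normalization, this is equivalent to $d^*\omega=0$. Equivalently, it says that the Lee form $\theta = J d^*\omega$ vanishes.

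For the bi-invariant structure, $(\g,J)$ is the complex Lie algebra $\sl(2,\C)$. I would take $\omega$ to be the fundamental form of the Hermitian metric $-\mathrm{Re}\,\kappa(X,\theta Y)$ built from the Killing form $\kappa$ and the Cartan involution $\theta$ for $\mathfrak{su}(2)$. This metric is $\mathsf{SU}(2)$-bi-invariant and $J$-Hermitian. The space of invariant $3$-forms (for the adjoint $\mathsf{SU}(2)$-action) is small, and $d(\omega^2)$ is an $\mathsf{SU}(2)$-invariant $5$-form. Its Hodge dual is an invariant $1$-form on $\g=\mathfrak{su}(2)\oplus \i\,\mathfrak{su}(2)$, which must vanish because $\mathfrak{su}(2)$ has no nonzero invariant vectors. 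Hence the metric is balanced.

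For the regular family, $J$ is determined by a Cartan subalgebra $\h$ together with a complex structure on $\h$, parametrized by the disk, and it is $\H$-right-invariant. I would fix the basis adapted to the root decomposition $\g = \h \oplus \g_\alpha \oplus \g_{-\alpha}$. I would then search for metrics in which the root spaces are orthogonal to each other and to $\h$, with $\g_{\pm\alpha}$ carrying multiples of a standard Hermitian form and $\h$ carrying some $J$-Hermitian form. In a complex basis $\{\zeta_1,\zeta_2,\zeta_3\}$ of $(1,0)$-forms, the balanced condition reduces to finitely many linear equations in the metric coefficients, namely the vanishing of the Lee form components. I expect a diagonal choice to work, possibly after a uniform scaling on $\h$ chosen by solving one linear equation in the disk parameter. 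An alternative is to average over the compact part of $\H$, restricting to invariant forms, and then argue as in the bi-invariant case that the Lee form lies in a small invariant subspace which we must kill.

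The non-regular structure is the main obstacle, since no symmetry is available to force the Lee form to vanish. There I would write down the explicit structure equations $d\zeta_i$ for a $(1,0)$-coframe given by the construction of the new structure earlier in the paper. I would then take a general positive $(1,1)$-form $\omega = \tfrac{\i}{2}\sum h_{i\bar j}\zeta_i\wedge\bar\zeta_j$ and compute $d(\omega^2)$, or equivalently $\partial\omega^2$, which is a $(3,2)$-form; this gives three complex linear equations in the $h_{i\bar j}$. I would first try a diagonal $h$ and then, if that is insufficient, allow one off-diagonal entry. The task is to show the resulting linear system has a positive-definite solution. A fallback is the Michelsohn-type criterion: a balanced metric exists unless there is a nonzero positive $(1,1)$-current that is the $(1,1)$-part of a boundary. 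For left-invariant forms, this reduces to checking that no nonzero positive invariant $(1,1)$-form is $d$-exact.
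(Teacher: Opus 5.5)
\textbf{There is a genuine gap: the existence claim is not proved for $\II_\lambda$ or $\III$.}

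Several parts of your proposal are fine. The reduction to left-invariant metrics on the three normal forms is correct. Your $\mathsf{SU}(2)$-invariance argument for $\I$ is also correct. It differs from the paper, which handles all three cases with a single computation.

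For the other two structures you never produce a metric. ``I expect a diagonal choice to work'' and ``I would write down the structure equations'' describe a search, not a proof. The case $\III$, which you flag as the main obstacle, is left entirely open.

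Your averaging alternative does not close the regular case either. $\Ad$ of the compact torus $\{\operatorname{diag}(e^{\i t},e^{-\i t})\}$ fixes exactly the diagonal matrices in $\g$. So an invariant Lee form is only forced into a real $2$-dimensional space, and those two components would still have to be killed by computation.

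Your Michelsohn fallback is also misstated. The invariant obstruction is a nonzero semipositive $(1,1)$-form equal to the $(1,1)$-component of some $d\alpha$, not a positive form that is $d$-exact. Ruling out exact positive forms alone would not give existence.

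\textbf{The missing step, which settles all cases at once.} Declare a basis $e_1,e_2,e_3$ of $\q$ unitary and set $\omega=\i\sum_k\varepsilon^k\wedge\overline{\varepsilon^k}$.
\begin{enumerate}[(i)]
\item Since $\sl(2,\C)$ is unimodular, $d$ vanishes on $\Lambda^5\g^*$. Hence for every $1$-form $\beta$ one has $\beta\wedge d\omega^2=d\beta\wedge\omega^2$.
\item The form $d\beta\wedge\omega^2$ equals a nonzero constant times $\sum_k d\beta(e_k,\sigma e_k)$ times the volume form. This is $-\beta\bigl(\sum_k[e_k,\sigma e_k]\bigr)$, up to that constant.
\item Therefore the metric is balanced if and only if $\sum_k[e_k,\sigma e_k]=0$. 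This is the Andrada--Villacampa criterion the paper uses.
\item Take the obvious bases: $(H,\lambda H),(E,0),(0,F)$ for $\II_\lambda$, and $(H,0),(E,0),(0,H+F)$ for $\III$ (and $(H,0),(E,0),(F,0)$ for $\I$).
\item Each $e_k$ commutes with $\sigma e_k$. Either the two lie in different summands of $\sl(2,\C)\oplus\sl(2,\C)$, or they are $(H,\lambda H)$ and $(\overline\lambda H,H)$, both in the abelian subalgebra $\C H\oplus\C H$.
\end{enumerate}
So every term vanishes, and the diagonal metric is balanced in every case. No symmetry is needed for $\III$.
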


On the other hand, we show that $(\Gamma \backslash \SL(2,\C), J)$ does not admit any pluriclosed metrics (see Section \ref{section_balanced_pluriclosed}). This verifies the Fino--Vezzoni conjecture \cite{FV} for $(\Gamma \backslash \SL(2,\C), J)$.

The paper is organised as follows. In Section \ref{section_complex_structures}, we recall how left-invariant complex structures on a Lie group are encoded by subalgebras of the complexified Lie algebra, and we translate equivalence under Lie group automorphisms into this language (Section \ref{subsection_general_equivalence}). In Section \ref{secsl}, we specialise to the real Lie algebra $\sl(2,\C)$: we identify its complexification with $\mathfrak{sl}(2,\mathbb C)\oplus\mathfrak{sl}(2,\mathbb C)$ (Section \ref{subsection_complexification}), describe its real automorphisms (Section \ref{subsection_real_automorphisms}), and record some elementary facts about its (complex) Borel subalgebras (Section \ref{subsection_borel}). 

Section \ref{section_main_theorem} contains the proof of Theorem \ref{main theorem}, which is restated more explicitly in \Cref{theorem classification}.
In Section \ref{section_topology}, we  describe the topology of the space of left-invariant complex structures modulo automorphisms (see Theorem \ref{theorem topology}). Finally, in Section \ref{section_balanced_pluriclosed}, we prove the statements about Hermitian metrics made above.

\smallskip
{\bf Acknowledgments.} 
This collaboration started during the MATRIX Research Program "Analytic and Geometric Methods on Complex Manifolds" and was pursued while the second named author was visiting The University of Queensland, which she warmly thanks for the hospitality. 
J. Kwong was supported by an Australian Government Research Training Scholarship.
A. Mainenti was partly supported by the PNRR-III-C9-2023-I8 grant CF 149/31.07.2023 {\it Conformal Aspects of Geometry and Dynamics}.

\section{Invariant complex structures on Lie groups}
\label{section_complex_structures}
Let $\G$ be a real Lie group. 
Recall that a complex structure on $\G$ is a (smooth) endomorphism of the tangent bundle $J:T\G \rightarrow T\G$ such that $J^2 = -\Id_{T\G}$ and $J$ satisfies the following integrability condition:
$$0= [X,Y] + J [JX,Y] + J [X,JY] - [JX, JY]$$
for all smooth vector fields $X,Y$ on $\G$.
We say that a complex structure $J$ on $\G$ is \textit{left-invariant} if all left translations $L_g:\G \rightarrow \G$ given by $L_g(h) = gh$ are biholomorphisms with respect to $J$, i.e. $dL_g \circ J  = J \circ dL_g$ for all $g \in \G$. 

Let $\g$ be the Lie algebra of $\G$, which is the real vector space of all left-invariant vector fields on $\G$, equipped with the Lie bracket of vector fields. 
If $J$ is a left-invariant complex structure on $\G$ and $X \in \g$, then $JX \in \g$. 
Thus, $J$ restricts to a linear map $J: \g \rightarrow \g$ which satisfies $J^2 = - \Id_\g$ and $N_J \equiv 0$, where $N_J: \g \times \g \rightarrow \g$ is given by 
$$N_J(X,Y) = [X,Y] + J [JX,Y] + J [X,JY] - [JX, JY]$$
for all $X,Y \in \g$. Conversely, every such linear map $J \in \End(\g)$ extends uniquely to a left-invariant complex structure on $\G$. Thus, we may identify the set of all left-invariant complex structures on $\G$ with 
$$\J(\g) := \left\{\text{$J \in \End(\g)$ such that $J^2 = - \Id_\g$ and $N_J \equiv 0$}\right\}.$$

Let $\g^\C$ denote the complexification of $\g$. Recall that $\g^\C$ is a complex Lie algebra: the underlying real vector space of $\g^\C$ is $\g^\C = \g \oplus \hat \i \g$, where $\hat \i$  is a formal symbol. We write elements of $\g^\C$ as $X + \hat \i Y$ for $X,Y \in \g$. Multiplication by $\i$ is given by $\i \cdot (X + \hat \i Y) := - Y + \hat \i X$, and the Lie bracket is obtained by $\C$-linearly extending the Lie bracket on $\g \oplus \hat \i 0 \cong \g$. 
The complexification $\g^\C$ comes equipped with a complex conjugation map $\sigma:\g^\C \rightarrow \g^\C$, given by $\sigma(X + \hat \i Y) = X - \hat \i Y$.
Next, let  $\Gr_n(\g^\C)$ denote the Grassmannian of $n$-dimensional complex subspaces of $\g^\C$, where $\dim_\C \g^\C = 2n$, and consider the following subset:
\begin{align*}
    \Q(\g) :=& \left\{\text{$\q \in \Gr_n(\g^\C)$ such that $\q \oplus \sigma \q = \g^\C$ and $\q$ is a subalgebra of $\g^\C$}\right\}.
\end{align*}

For $J \in \J(\g)$, the  $\i$-eigenspace $\qj$ of the $\C$-linear extension $J: \g^\C \rightarrow \g^\C$ belongs to $\Q(\g)$. Indeed,  $J^2 = - \Id_\g$ implies that $\qj \oplus \sigma \qj = \g^\C$, and $N_J \equiv 0$ implies that $\qj$ is a subalgebra. Conversely, any $\q \in \Q(\g)$ is the $\i$-eigenspace of a unique $J \in \J(\g)$.
In summary:
\begin{proposition}
    \label{prop_bijection_J_Q}
    We have  a bijection $\J(\g) \cong \Q(\g)$ given by
    $$\J(\g) \longrightarrow \Q(\g), \qquad J \longmapsto \text{\normalfont the $\i$-eigenspace of $J:\g^\C \rightarrow \g^\C$}.$$
\end{proposition}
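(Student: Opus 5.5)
The plan is to check that the map is well defined, then construct an explicit inverse. Both directions are linear algebra; the only real content is translating $N_J \equiv 0$ into closure of the eigenspace under the bracket.

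\emph{Well-definedness.} Let $J \in \J(\g)$ and extend it $\C$-linearly to $\g^\C$. Since $J^2 = -\Id$, $J$ is diagonalisable on $\g^\C$ with eigenvalues $\pm\i$. Because $J$ is real, it commutes with $\sigma$, so $\sigma$ maps the $\i$-eigenspace $\qj$ onto the $(-\i)$-eigenspace. Hence $\g^\C = \qj \oplus \sigma\qj$, and both summands have dimension $n$. Explicitly,
\[
\qj = \{X - \i JX : X \in \g\}.
\]
For integrability, extend $N_J$ complex-bilinearly. If $Z, W \in \qj$, then $JZ = \i Z$ and $JW = \i W$, and a direct computation gives
\[
N_J(Z,W) = 2[Z,W] + 2\i J[Z,W].
\]
So $N_J(Z,W) = 0$ is equivalent to $J[Z,W] = \i[Z,W]$, i.e.\ $[Z,W] \in \qj$. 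Thus $N_J \equiv 0$ forces $\qj$ to be a subalgebra, and $\qj \in \Q(\g)$.

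\emph{Inverse map.} Given $\q \in \Q(\g)$, define $J$ on $\g^\C = \q \oplus \sigma\q$ to be $\i$ on $\q$ and $-\i$ on $\sigma\q$. This operator commutes with $\sigma$, because $\sigma$ swaps the two summands and is antilinear. Therefore $J$ preserves the real points $\g$, and it restricts to a real endomorphism with $J^2 = -\Id_\g$.

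For integrability, note that $N_J$ is real and bilinear, so it suffices to check its complexification on the three types of pairs:
\begin{itemize}
  \item On $\q \times \q$, the identity above gives $N_J = 2[Z,W] + 2\i J[Z,W]$. This vanishes because $\q$ is a subalgebra, so $J[Z,W] = \i[Z,W]$.
  \item On $\sigma\q \times \sigma\q$, vanishing follows by applying $\sigma$ to the previous case.
  \item On mixed pairs $Z \in \q$, $W \in \sigma\q$, a direct computation shows all four terms cancel identically:
  \[
  [Z,W] + J[\i Z, W] + J[Z, -\i W] - [\i Z, -\i W] = [Z,W] + 0 - [Z,W] = 0.
  \]
\end{itemize}
Hence $J \in \J(\g)$, and its $\i$-eigenspace is $\q$ by construction.

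The two constructions are mutually inverse because $J$ is determined by its $\pm\i$-eigenspaces. The only step needing care is the eigenspace computation of $N_J$, and it is routine.
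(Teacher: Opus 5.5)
Your proof is correct and follows the same route as the paper. The paper only asserts the two directions: $J^2=-\Id_\g$ gives $\qj\oplus\sigma\qj=\g^\C$, $N_J\equiv 0$ makes $\qj$ a subalgebra, and conversely each $\q\in\Q(\g)$ is the $\i$-eigenspace of a unique $J$; your explicit computations of $N_J$ on $\q\times\q$, $\sigma\q\times\sigma\q$ and mixed pairs supply exactly the details it leaves implicit.
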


\begin{example}
    A complex structure on $\G$ is called \textit{bi-invariant} if both the left and right translations of $\G$ are biholomorphisms. For a left-invariant complex structure $J \in \J(\g)$ with $\i$-eigenspace $\q \in \Q(\g)$, the following are equivalent:
    \begin{enumerate}[(i)]
        \item $J$ is bi-invariant.
        \item $\ad(X) \circ J  = J \circ \ad(X)$ for all $X \in \g$.
        \item $[\g^\C,\q] \subseteq \q$, i.e. $\q$ is an ideal of $\g^\C$. 
    \end{enumerate}

\end{example}

\subsection{Equivalence of invariant complex structures} 
\label{subsection_general_equivalence}
Let $\Aut(\G)$ denote the Lie group automorphisms of $\G$, and let $\Aut_\R(\g)$ denote the (real) Lie algebra automorphisms of $\g$.
\begin{proposition}
    \label{prop_equivalences}
    Let $J,J' \in \J(\g)$, and let $\q, \q' \in \Q(\g)$ denote their corresponding $\i$-eigenspaces, respectively. If $\G$ is simply connected, the following are equivalent:
    \begin{enumerate}[\normalfont (i)]
        \item\label{peq1} There exists $F \in \Aut(\G)$ such that $J' = dF \circ J \circ dF^{-1}$.
        \item\label{peq2} There exists $\varphi \in \Aut_\R(\g)$ such that $J' = \varphi \circ J \circ \varphi^{-1}$.
        \item\label{peq3} There exists $\varphi \in \Aut_\R(\g)$ such that $\q' = \varphi(\q)$. Here, we have identified $\varphi$ with its $\C$-linear extension $\varphi:\g^\C \rightarrow \g^\C$.
    \end{enumerate}
\end{proposition}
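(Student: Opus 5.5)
We prove the chain (i) $\Leftrightarrow$ (ii) $\Leftrightarrow$ (iii). The only place where simple connectedness is needed is (ii) $\Rightarrow$ (i).

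For (i) $\Rightarrow$ (ii), take $F \in \Aut(\G)$ and set $\varphi := dF_e$. This is a Lie algebra automorphism of $\g$, because the differential of a Lie group homomorphism preserves brackets. It remains to see that the relation $J' = dF \circ J \circ dF^{-1}$, read at the identity, becomes $J' = \varphi \circ J \circ \varphi^{-1}$ on $\g$. Since $F(gh)=F(g)F(h)$, we have $F \circ L_g = L_{F(g)} \circ F$. Hence $dF$ sends left-invariant vector fields to left-invariant vector fields, acting on $\g$ as $\varphi$. Both sides of the relation are left-invariant tensors, so the relation is determined by its value at $e$.

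For (ii) $\Rightarrow$ (i), use that $\G$ is simply connected. By Lie's second theorem, every $\varphi \in \Aut_\R(\g)$ integrates to a unique Lie group homomorphism $F:\G\to\G$ with $dF_e=\varphi$. Applying the same theorem to $\varphi^{-1}$ gives a homomorphism $F'$ with $dF'_e=\varphi^{-1}$. Then $F\circ F'$ and $F' \circ F$ have differential $\Id$ at $e$, and since $\G$ is connected they equal the identity. So $F\in\Aut(\G)$. By the left-invariance argument above, $dF \circ J \circ dF^{-1}$ is the left-invariant endomorphism field whose value at $e$ is $\varphi J\varphi^{-1} = J'$, so it equals $J'$.

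For (ii) $\Leftrightarrow$ (iii), use Proposition \ref{prop_bijection_J_Q}. Extend $\varphi$ $\C$-linearly to $\g^\C$. If $J'=\varphi J\varphi^{-1}$ and $v\in\q$, then $J'\varphi v=\varphi J v=\i\,\varphi v$, so $\varphi(\q)\subseteq\q'$; equality follows by comparing dimensions. Conversely, suppose $\varphi(\q)=\q'$. The extended $\varphi$ commutes with $\sigma$, so $\varphi(\sigma\q)=\sigma\q'$. Thus $\varphi J\varphi^{-1}$ has $\i$-eigenspace $\q'$ and $-\i$-eigenspace $\sigma\q'$, exactly as $J'$ does. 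Since $\g^\C=\q'\oplus\sigma\q'$, the two endomorphisms coincide; this is the uniqueness part of Proposition \ref{prop_bijection_J_Q}. The only nontrivial input in the whole argument is the integration step in (ii) $\Rightarrow$ (i), and there the hypothesis that $\G$ is simply connected is essential.
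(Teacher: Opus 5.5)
Your proof is correct and follows the paper's argument: (i)$\Rightarrow$(ii) by taking the differential, and (ii)$\Rightarrow$(i) by integrating $\varphi$ using simple connectedness. For the latter you spell out, via Lie's second theorem and connectedness, why the integrated map is an automorphism, which the paper simply asserts as surjectivity of $d:\Aut(\G)\to\Aut_\R(\g)$. Finally, (ii)$\Leftrightarrow$(iii) rests on the equivariance $\q_{\varphi J\varphi^{-1}}=\varphi(\q)$ of the bijection $J\mapsto \q_J$, which you verify directly rather than just stating it.
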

If any, and hence all, of these conditions hold, we say that $J$ and $J'$ are \textit{equivalent}, and we say that $\q$ and $\q'$ are \textit{equivalent}. 

\begin{remark}
    Complex-linear extension $\Aut_\R(\g) \xhookrightarrow{} \Aut_\C(\g^\C)$ is an injective Lie group homomorphism. We identify $\Aut_\R(\g)$ with its image, which is the subgroup of automorphisms commuting with $\sigma$:
    $$\Aut_\R(\g) \cong \left\{ \varphi \in \Aut_\C(\g^\C) : \varphi \circ \sigma = \sigma \circ \varphi\right\}.$$
\end{remark}

\begin{proof}[Proof of Proposition \ref{prop_equivalences}]
    Clearly, \eqref{peq1} implies \eqref{peq2}: if $F \in \Aut(\G)$, then the restriction to left-invariant vector fields $dF|_\g: \g \rightarrow \g$ is a Lie algebra automorphism.
    
    Next, consider the differential map $d:\Aut(\G) \rightarrow \Aut_\R(\g)$, which is a Lie group homomorphism. When $\G$ is connected, this map is injective. When $G$ is simply connected, $d:\Aut(\G) \rightarrow \Aut_\R(\g)$ is also surjective. Thus, for every $\varphi \in \Aut_\R(\g)$, there exists $F \in \Aut(\G)$ such that $\varphi = dF$, so \eqref{peq2} implies \eqref{peq1}.

    The group $\Aut_\R(\g)$ acts on $\J(\g)$ and $\Q(\g)$ by 
    $$\varphi \cdot J = \varphi \circ J \circ \varphi^{-1}, \qquad \varphi \cdot \q = \varphi(\q),$$
    respectively. Observe that \eqref{peq2} holds if and only if $J$ and $J'$ belong to the same $\Aut_\R(\g)$ orbit, and \eqref{peq3} holds if and only if $\q$ and $\q'$ belong to the same $\Aut_\R(\g)$ orbit. Now, the bijection (see Proposition \ref{prop_bijection_J_Q}) 
    $$\J(\g) \longrightarrow \Q(\g), \qquad J \longmapsto \q_J :=\text{\normalfont the $\i$-eigenspace of $J$}$$
    is $\Aut_\R(\g)$-equivariant: indeed, observe that $\q_{\varphi \circ J \circ \varphi^{-1}} = \varphi(\q)$. Therefore, \eqref{peq2} and \eqref{peq3} are equivalent.
\end{proof}

\subsection{Regular complex structures}
\label{subsection_regular}
Let $\G$ be a real semisimple Lie group with Lie algebra $\g$. A subalgebra $\h$ of $\g$ is called a \textit{Cartan subalgebra} if $\h^\C$ is a Cartan subalgebra of $\g^\C$ (see \cite[Chapter VI.6]{knapp}). A connected Lie subgroup $\H$ of $\G$ is called a \textit{Cartan subgroup} if its Lie algebra is a Cartan subalgebra of $\g$. 
\begin{proposition}
    \label{prop_general_regularity}
    Let $J \in \J(\g)$ and let $\q \in \Q(\g)$ be the corresponding $\i$-eigenspace. The following are equivalent:
    \begin{enumerate}[\normalfont (i)]
        \item $J$ is right-invariant with respect to a Cartan subgroup $\H$ of $\G$.
        \item There exists a Cartan subalgebra $\h$ of $\g$ such that for every $X \in \h$, $$\ad(X) \circ J = J \circ \ad(X).$$ 
        \item There exists a Cartan subalgebra $\h$ of $\g$ such that $[\h^\C, \q] \subseteq \q$.
    \end{enumerate}
\end{proposition}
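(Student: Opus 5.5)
The plan is to prove the chain (i)$\Leftrightarrow$(ii)$\Leftrightarrow$(iii) with the same Cartan subalgebra $\h$ (and $\H$ the connected subgroup with Lie algebra $\h$) used throughout. Each step mirrors the bi-invariant example above, except that right translations are now only by elements of $\H$.

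For (i)$\Leftrightarrow$(ii), we use that $J$ is left-invariant. Right translation $R_h$ is then a biholomorphism if and only if $dR_h$ commutes with $J$ at the identity after left translating back. Equivalently, $\Ad(h^{-1}) = d(L_{h^{-1}}\circ R_h)_e$ commutes with $J$ on $\g$, since $L_{h^{-1}}$ is already holomorphic and $R_h$ commutes with all $L_g$. So (i) holds if and only if $\Ad(h)\circ J = J\circ \Ad(h)$ for all $h\in\H$.
\begin{itemize}
\item Differentiating along the one-parameter subgroups $\exp(tX)$, $X\in\h$, gives $\ad(X)\circ J = J\circ\ad(X)$.
\item Conversely, $\Ad(\exp X) = e^{\ad X}$ commutes with $J$ whenever $\ad X$ does.
\item Since $\H$ is connected, it is generated by $\exp(\h)$, so commutation with $\Ad(h)$ for all $h\in\H$ follows.
\end{itemize}

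For (ii)$\Leftrightarrow$(iii), extend everything $\C$-linearly to $\g^\C$. If $\ad(X)$ commutes with $J$ for $X\in\h$, then $\ad(X)$ preserves the $\i$-eigenspace $\q$. By $\C$-linearity, so does $\ad(X+\hat\i Y)=\ad X+\i\,\ad Y$, giving $[\h^\C,\q]\subseteq\q$. For the converse, take $X\in\h$ real. Its $\C$-linear extension commutes with $\sigma$, so $\ad(X)$ preserves $\q$ and also $\sigma\q$, which is the $-\i$-eigenspace of $J$. Because $\g^\C=\q\oplus\sigma\q$, the map $\ad(X)$ preserves both eigenspaces of $J$ and hence commutes with $J$ on $\g^\C$, and in particular on $\g$.

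There is no real obstacle here. The only point needing care is the step from (i) to the infinitesimal condition: one must check that right-invariance under $\H$ is equivalent to $\Ad(\H)$-invariance of $J|_\g$, using that $\H$ is connected. The definition of Cartan subgroup as a connected subgroup supplies exactly this. Note also that in (iii) the inclusion is only assumed for $\h^\C$, which is spanned over $\C$ by $\h$, so the real and complex conditions match.
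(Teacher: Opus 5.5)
Your proof is correct. The paper states this proposition without proof, as it also does for the analogous bi-invariant example, so there is no argument of its own to compare against. What you give is the standard verification: reduce right-invariance under $\H$ to $\Ad(\H)$-invariance of $J|_\g$, pass to $\ad(\h)$ using that the connected group $\H$ is generated by $\exp(\h)$, and then use $\g^\C=\q\oplus\sigma\q$ together with the fact that $\ad X$ commutes with $\sigma$ for real $X$.
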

If any of the equivalent conditions hold, we say that $J$ and $\q$ are \textit{regular}. Clearly, any bi-invariant complex structure $J \in \J(\g)$ is regular.

\section{The Lie algebra \texorpdfstring{$\sl(2,\C)$}{sl(2,C)}}
\label{secsl}
\subsection{The complexification of \texorpdfstring{$\sl(2,\C)$}{sl(2,C)}}
\label{subsection_complexification}
Let $\SL(2,\C)$ be the real Lie group of all complex $2$ by $2$ matrices $A \in \C^{2 \times 2}$ with $\det(A) = 1$. As a smooth manifold, $\SL(2,\C)$ is diffeomorphic to $S^3 \times \R^3$ \cite[Theorem 6.31 (c)]{knapp}, and so is simply connected.
We identify the Lie algebra of $\SL(2,\C)$ with its tangent space at the identity, which is 
$$\sl(2,\C) = \left\{ X \in \C^{2 \times 2} : \tr X = 0\right\}.$$
Under this identification, the induced Lie bracket on $\sl(2,\C)$ is the commutator bracket $[X,Y] = XY - YX$. 
Let $H,E,F \in \sl(2,\C)$ denote the matrices given by
    $$H = \begin{pmatrix}
    1 & 0 \\ 0 & -1
\end{pmatrix}, \qquad E =\begin{pmatrix}
    0 & 1\\ 0 & 0
\end{pmatrix},  \qquad  F = \begin{pmatrix}
    0 & 0 \\ 1 & 0
\end{pmatrix}.$$

The complexification $\sl(2,\C)^\C$ of the real Lie algebra $\sl(2,\C)$ is isomorphic to the complex Lie algebra $\sl(2,\C) \oplus \sl(2,\C)$. 
In fact, we have an isomorphism of complex Lie algebras 
$$
\sl(2,\C)^\C \rightarrow \sl(2,\C) \oplus \sl(2,\C), \qquad X + \hat \i Y \mapsto (X + \i Y, \overline{X - \i Y}),
$$
where $\overline{(\cdot)}$  denotes entrywise complex conjugation in $\C^{2 \times 2}$. Henceforth, we identify $\sl(2,\C)^\C $ with $\sl(2,\C) \oplus \sl(2,\C)$ via the isomorphism above. Under this identification, the complex conjugation of $\sl(2,\C)^\C$ is given by $$\sigma:(X,Y) \mapsto (\overline Y, \overline X). $$
Note that the natural embedding of $\sl(2,\C) \xhookrightarrow{} \sl(2,\C)^\C$ is given by $X \mapsto (X, \overline X)$.

\subsection{Real automorphisms of \texorpdfstring{$\sl(2,\C)$}{sl(2,C)}}
\label{subsection_real_automorphisms}
For each $A \in \SL(2,\C)$, let $\Ad_A:\sl(2,\C) \rightarrow \sl(2,\C)$ be the automorphism $\Ad_A X = AXA^{-1}$, and consider the Lie group homomorphism $\Ad:\SL(2,\C) \rightarrow \Aut_\R(\sl(2,\C))$, $A \mapsto \Ad_A$.
\begin{proposition}
    \label{prop_automorphisms}
    Every real automorphism of $\sl(2,\C)$ is of the form $\Ad_A$, or $\Ad_A$ followed by $X \mapsto \overline X$, where $A \in \SL(2,\C)$. In symbols:
    $$\Aut_\R(\sl(2,\C)) = \{\Id, X \mapsto \overline X\} \cdot \Ad(\SL(2,\C)) .$$
\end{proposition}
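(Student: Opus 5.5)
Every real automorphism $\varphi$ of $\sl(2,\C)$ extends $\C$-linearly to an automorphism of $\sl(2,\C)^\C \cong \sl(2,\C)\oplus\sl(2,\C)$ commuting with $\sigma:(X,Y)\mapsto(\overline Y,\overline X)$. The plan is to classify these extensions and then read off $\varphi$ from the embedding $X\mapsto (X,\overline X)$.

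\textbf{Step 1: the extension preserves or swaps the two factors.} The factors $\sl(2,\C)\oplus 0$ and $0\oplus\sl(2,\C)$ are the only nontrivial proper ideals of $\sl(2,\C)\oplus\sl(2,\C)$. Indeed, an ideal $I$ containing $(X,Y)$ with $X\neq 0$ contains $[(Z,0),(X,Y)]=([Z,X],0)$ for all $Z$, and hence, by simplicity, all of $\sl(2,\C)\oplus 0$. A complex automorphism permutes these ideals. Therefore the extension has one of two forms:
\begin{itemize}
\item $(X,Y)\mapsto(\alpha X,\beta Y)$;
\item $(X,Y)\mapsto(\alpha Y,\beta X)$,
\end{itemize}
with $\alpha,\beta\in\Aut_\C(\sl(2,\C))$.

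\textbf{Step 2: every complex automorphism of $\sl(2,\C)$ is inner.} This is the standard fact that the outer automorphism group of $A_1$ is trivial. Concretely, an automorphism maps the Cartan subalgebra $\C H$ to another Cartan subalgebra, which is conjugate to $\C H$. After composing with an inner automorphism, it preserves $\C H$ and hence sends $H\mapsto\pm H$. Composing further with conjugation by $\begin{pmatrix}0&1\\-1&0\end{pmatrix}$ if necessary, it fixes $H$, so it rescales $E\mapsto cE$ and $F\mapsto c^{-1}F$. This rescaling is $\Ad$ of a diagonal matrix. So $\alpha=\Ad_A$ and $\beta=\Ad_B$ for some $A,B\in\SL(2,\C)$.

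\textbf{Step 3: impose $\sigma$-compatibility.} In the first case, $\varphi\sigma=\sigma\varphi$ means
$$(\alpha\overline Y,\beta\overline X)=(\overline{\beta Y},\overline{\alpha X}),$$
so $\beta=\overline{\alpha}$, where $\overline{\alpha}(X):=\overline{\alpha(\overline X)}$. Restricting to $X\mapsto(X,\overline X)$, the automorphism is then $X\mapsto AXA^{-1}=\Ad_A X$.

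In the second case the same computation gives $\beta=\overline\alpha$ again. On the real form the automorphism is $X\mapsto \alpha(\overline X)=A\overline XA^{-1}$, which is $X\mapsto\overline X$ followed by $\Ad_A$. This equals $\Ad_{\overline A}$ followed by $X\mapsto \overline X$, since $\overline{\overline A X\overline A^{-1}}=A\overline XA^{-1}$. That is exactly the claimed form.

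Both kinds of maps are clearly real automorphisms, so the inclusion $\supseteq$ is immediate. This gives the stated equality.

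The main obstacle, if it is not simply cited from the literature, is Step 2, the inner-ness of complex automorphisms of $\sl(2,\C)$. The rest is bookkeeping with $\sigma$.
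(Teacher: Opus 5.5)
Your proof is correct, but it takes a more self-contained route than the paper, which simply cites three facts. These are Djoković's result that $\Aut_\R(\sl(2,\C)) = \{\Id, X\mapsto\overline X\}\cdot\Aut_\C(\sl(2,\C))$ (a real automorphism is either $\C$-linear or $\C$-antilinear), the connectedness of $\Aut_\C(\sl(2,\C))$ because the $A_1$ Dynkin diagram has no symmetries, and the fact that the identity component of the automorphism group of a complex semisimple Lie algebra consists of inner automorphisms.

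Your Step 1 is the complexified form of the first fact. Preserving versus swapping the two simple ideals of $\sl(2,\C)\oplus\sl(2,\C)$ is exactly $\C$-linearity versus $\C$-antilinearity of $\varphi$. You derive it from the ideal structure instead of citing it, and the argument works for any complex simple Lie algebra. Your Step 2 replaces the diagram-automorphism and connectedness argument with a direct $\sl_2$ computation. The reduction to $H\mapsto\pm H$ deserves one line: $\ad_{\psi(H)}=\psi\,\ad_H\,\psi^{-1}$ has eigenvalues $0,\pm 2$, or alternatively $\psi$ preserves the Killing form.

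The citation route is shorter and shows where the result sits in general structure theory; for a general complex simple $\g$, the diagram automorphisms would enter at this point. Your route is elementary and uses the identification $\sl(2,\C)^\C\cong\sl(2,\C)\oplus\sl(2,\C)$ that the paper sets up anyway. As a byproduct it already gives the paper's next proposition: the $\C$-linear extensions of real automorphisms are exactly the maps $\theta_A$ and $\theta_A\circ\flip$.
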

\begin{proof}
    By \cite[Proposition 4.1]{Djo99}, we can write
    $$\Aut_\R(\sl(2,\C)) = \{\Id, X \mapsto \overline X\} \cdot \Aut_\C(\sl(2,\C)),$$
    where $\Aut_\C(\sl(2,\C))$ denotes the group of complex-linear automorphisms. By \cite[Proposition D.40]{fulton}, $\Aut_\C(\sl(2,\C))$ is connected because the Dynkin diagram of $\sl(2,\C)$ has no non-trivial automorphisms. Since $\sl(2,\C)$ is a complex semisimple Lie algebra, the identity component of $\Aut_\C(\sl(2,\C))$ is precisely the subgroup of inner automorphisms $\Ad(\SL(2,\C))$ \cite[Proposition 1.98]{knapp}.
\end{proof}

\begin{remark}
    \label{remark_group_automorphism}
    The real automorphism group $\Aut(\SL(2,\C))$ is given by 
    $$\Aut(\SL(2,\C)) = \{\Id, A \mapsto \overline A\} \cdot \left\{C_A : A \in \SL(2,\C)\right\},$$
    where $C_A: \SL(2,\C) \rightarrow \SL(2,\C)$ is given by $C_A B = ABA^{-1}$. Indeed, since $\SL(2,\C)$ is simply connected, the differential $d:\Aut(\SL(2,\C)) \rightarrow  \Aut_\R(\sl(2,\C))$ is an isomorphism. We find 
    $d C_A = \Ad_A$ and $d(A \mapsto \overline A) = (X \mapsto \overline X)$. 

    Observe that every automorphism of $\SL(2,\C)$ preserves $\{\pm I_2\}$, and therefore descends to $\PSL(2,\C) = \SL(2,\C) / \{\pm I_2\}$. Conversely, every automorphism of $\PSL(2,\C)$ lifts to $\SL(2,\C)$.
\end{remark}

Finally, let us determine the image of the $\C$-linear extension map $\Aut_\R(\sl(2,\C)) \xhookrightarrow{} \Aut_\C(\sl(2,\C)^\C),$ where we have identified $\sl(2,\C)^\C \cong \sl(2,\C) \oplus \sl(2,\C)$ (see Section \ref{subsection_complexification}).
\begin{proposition}
    The $\C$-linear extension of $\Ad_A: \sl(2,\C) \rightarrow \sl(2,\C)$ is given by
    $$\theta_A: \sl(2,\C)^\C \longrightarrow \sl(2,\C)^\C, \qquad (X,Y) \longmapsto (AXA^{-1}, \overline A Y \overline A^{-1}),$$
    and the $\C$-linear extension of $\sl(2,\C) \rightarrow \sl(2,\C)$, $X \mapsto \overline X$ is given by 
    $$\flip: \sl(2,\C)^\C \longrightarrow \sl(2,\C)^\C, \qquad (X,Y) \longmapsto (Y,X).$$
\end{proposition}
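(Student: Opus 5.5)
The plan is to verify both formulas directly on the image of the real Lie algebra and then extend by complex linearity. Write $\iota: \sl(2,\C)^\C \to \sl(2,\C)\oplus\sl(2,\C)$ for the identification of Section \ref{subsection_complexification}, $X+\hat\i Y \mapsto (X+\i Y, \overline{X-\i Y})$. By definition, the $\C$-linear extension $\tilde\varphi$ of a real automorphism $\varphi$ is the map $X+\hat\i Y \mapsto \varphi(X)+\hat\i\varphi(Y)$. It is the unique $\C$-linear map on $\sl(2,\C)^\C$ that agrees with $\varphi$ on the real form $\sl(2,\C)\oplus\hat\i 0$.

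Transported through $\iota$, the real form is the set of pairs $(X,\overline X)$ with $X \in \sl(2,\C)$. Since this real form spans $\sl(2,\C)^\C$ over $\C$, it suffices to check two things for each candidate map: that it is $\C$-linear on $\sl(2,\C)\oplus\sl(2,\C)$, and that on pairs $(X,\overline X)$ it acts by $(X,\overline X)\mapsto(\varphi(X),\overline{\varphi(X)})$.

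For $\theta_A$, $\C$-linearity is immediate, since conjugation by $A$ and by $\overline A$ are complex-linear in each slot. On the real form we compute
$$\theta_A(X,\overline X) = (AXA^{-1}, \overline A\,\overline X\,\overline A^{-1}) = (AXA^{-1}, \overline{AXA^{-1}}),$$
which is the image of $\Ad_A X$. For $\flip$, $\C$-linearity is again clear, and
$$\flip(X,\overline X) = (\overline X, X) = (\overline X, \overline{\overline X}),$$
which is the image of $\overline X$. Uniqueness of complex-linear extensions then gives both claims.

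The only point needing care is that $\iota$ is genuinely $\C$-linear, so that "$\C$-linear extension" means the same thing on both sides. We check this on multiplication by $\i$: the element $\i\cdot(X+\hat\i Y) = -Y+\hat\i X$ maps to
$$(-Y+\i X, \overline{-Y-\i X}) = (\i(X+\i Y), \i\,\overline{X-\i Y}),$$
which is $\i$ times $\iota(X+\hat\i Y)$. This confirms $\C$-linearity, and beyond this the argument is routine.
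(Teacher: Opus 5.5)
Your proposal is correct and follows essentially the same argument as the paper: identify the real form with the pairs $(X,\overline X)$, check that $\theta_A$ and $\flip$ restrict there to $\Ad_A$ and $X\mapsto\overline X$, and conclude by $\C$-linearity and uniqueness of the extension. Your additional check that the identification $\sl(2,\C)^\C\cong\sl(2,\C)\oplus\sl(2,\C)$ is $\C$-linear is a detail the paper takes for granted.
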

\begin{proof}
    Identify the real Lie algebra $\sl(2,\C)$ with its image under the embedding $\sl(2,\C) \xhookrightarrow{} \sl(2,\C)^\C$, $X \mapsto (X, \overline X)$. Under this identification, it is easy to see that $\Ad_A = \theta_A|_{\sl(2,\C)}$ and 
    $(X \mapsto \overline X) = \flip|_{\sl(2,\C)}$.
    Since $\theta_A$ and $\flip$ are $\C$-linear, we are done.
\end{proof}

\subsection{Borel subalgebras of \texorpdfstring{$\sl(2,\C)$}{sl(2,C)}} 
\label{subsection_borel}
Recall that a Borel subalgebra of a complex semisimple Lie algebra is a maximal solvable subalgebra. For $\sl(2,\C)$, the Borel subalgebras are precisely the $2$-dimensional subalgebras. Indeed, any $2$-dimensional complex Lie algebra is solvable.
The Borel subalgebras of $\sl(2,\C)$ are parameterised by the complex projective line $\P^1$, i.e. the complex $1$-dimensional subspaces $\ell$ of $\C^2$:

\begin{proposition}
    \label{borel subalgebras parameterisation}
    The map $\Phi:\P^1 \rightarrow \left\{\text{\normalfont Borel subalgebras of $\sl(2,\C)$}\right\}$ given by
    $$ \ell \mapsto \b_\ell := \left\{ X \in \sl(2,\C): X \ell \subseteq \ell\right\}$$
    is a well-defined bijection. Moreover, $\Phi$ is $\SL(2,\C)$-equivariant, where $\SL(2,\C)$ acts on $\P^1$ via $A \cdot \ell = A\ell$, and on the Borel subalgebras via $A \cdot \b = A \b A^{-1}$.
\end{proposition}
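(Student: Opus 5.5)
To prove Proposition \ref{borel subalgebras parameterisation}, I will check well-definedness, injectivity, surjectivity and equivariance in that order. Throughout, I will use the remark before the statement that the Borel subalgebras of $\sl(2,\C)$ are exactly its $2$-dimensional subalgebras.

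\emph{Well-definedness.} Let $\ell \in \P^1$. The set $\b_\ell$ is closed under brackets, since $X\ell \subseteq \ell$ and $Y\ell\subseteq \ell$ give $[X,Y]\ell \subseteq \ell$. To compute its dimension, pick $A \in \SL(2,\C)$ with $A\,\C e_1 = \ell$. Then $X \in \b_\ell$ if and only if $A^{-1}XA \in \b_{\C e_1}$, so $\b_\ell = A\,\b_{\C e_1}A^{-1}$. Directly, $\b_{\C e_1}$ consists of the traceless upper triangular matrices, which is $\operatorname{span}\{H,E\}$. This space is $2$-dimensional, so $\b_\ell$ is a $2$-dimensional subalgebra and hence a Borel subalgebra. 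The same computation gives equivariance: for any $B \in \SL(2,\C)$ and $X \in \sl(2,\C)$, we have $X \in \b_{B\ell}$ if and only if $B^{-1}XB\,\ell \subseteq \ell$. Hence $\b_{B\ell} = B\,\b_\ell B^{-1}$.

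\emph{Injectivity.} I will show that $\ell$ can be recovered from $\b_\ell$ as the unique line that every element of $\b_\ell$ preserves. By equivariance it is enough to treat $\ell = \C e_1$. The element $H \in \b_{\C e_1}$ has exactly two invariant lines, $\C e_1$ and $\C e_2$. The element $E$ does not preserve $\C e_2$. So $\C e_1$ is the only common invariant line, and therefore $\b_\ell = \b_{\ell'}$ forces $\ell = \ell'$.

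\emph{Surjectivity.} This is the step needing a genuine argument, because it must show that every $2$-dimensional subalgebra $\b$ has a common eigenline. A $2$-dimensional complex Lie algebra is solvable, so Lie's theorem applied to the defining representation $\b \subseteq \End(\C^2)$ gives a line $\ell$ with $\b\ell\subseteq \ell$. Then $\b \subseteq \b_\ell$, and since both have dimension $2$, $\b = \b_\ell$.

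If I wanted to avoid Lie's theorem, I would argue directly instead. The derived algebra $[\b,\b]$ has dimension at most $1$. If $[\b,\b] = 0$, then $\b$ is an abelian $2$-dimensional subalgebra of $\sl(2,\C)$. This is impossible, because centralizers in $\sl(2,\C)$ of nonzero elements are $1$-dimensional, as one checks for semisimple and nilpotent normal forms. So $[\b,\b] = \C N$ for some nonzero $N$, and $[X,N] = \lambda N$ for a suitable $X\in\b$ with $\lambda\neq 0$. It follows that $N$ is nilpotent, and $\ell=\ker N$ is preserved by $\b$.
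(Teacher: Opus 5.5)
Your proof is correct and follows essentially the same route as the paper: the explicit computation $\b_{\C e_1}=\C\{H,E\}$ for the dimension, recovering $\ell$ as the only line preserved by all of $\b_\ell$ for injectivity (the paper uses a nilpotent element with a single eigenline, where you combine $H$ and $E$), Lie's theorem plus a dimension count for surjectivity, and direct conjugation for equivariance. Your optional Lie-theorem-free argument is also sound, but the claim that $N$ is nilpotent needs a line of justification: for example, $[X,N^2]=2\lambda N^2$ with $\lambda\neq 0$ forces $\tr(N^2)=0$, which together with $\tr N=0$ gives $\det N=0$.
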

\begin{proof}
    Fix $\ell \in \P^1$. It is easy to see that $\b_\ell$ is a complex subalgebra of $\sl(2,\C)$. To see that $\b_\ell$ is $2$-dimensional, fix a non-zero $v \in \ell$ and choose $w \in \C^2 \backslash \ell$, so that $\{v,w\}$ is a basis of $\C^2$. In this ordered basis, we find 
    $$\b_\ell = \left\{ \begin{pmatrix}
        a & b \\ 0 & -a
    \end{pmatrix}: a,b \in \C \right\}.$$

    To show injectivity, suppose $\b_\ell = \b_{\ell'}$ for some $\ell, \ell' \in \P^1$. If $\ell \neq \ell'$, then every $X \in \b_\ell$ has two linearly independent eigenvectors. However, by the previous paragraph, there always exists $X \in \b_\ell$ which does not have two linearly independent eigenvectors (for example, take  the matrix $X$ with $Xv = 0$ and $Xw = v$), a contradiction.

    To show surjectivity, fix a Borel subalgebra $\b \leq \sl(2,\C)$. Since $\b$ is a solvable matrix Lie algebra, Lie's theorem (see \cite[\S4.1]{humphreys} or \cite[Corollary 1.29]{knapp}) implies that there exists $\ell \in \P^1$ such that $X \ell \subseteq \ell$ for all $X \in \b$, so $\b = \b_\ell$.

    Finally, to show that $\Phi$ is $\SL(2,\C)$-equivariant, fix $\ell \in \P^1$, and let us show that $A \b_\ell A^{-1} = \b_{A \ell}$. Indeed, if $X \in \b_\ell$, then $(AXA^{-1} )(A \ell) = AX \ell \subseteq A \ell$, so $A \b_\ell A^{-1} \subseteq  \b_{A \ell}$. The opposite inclusion holds for dimensional reasons.
\end{proof}
Recall that the action of $\SL(2,\C)$ on $\P^1$ by M\"obius transformations is \textit{$2$-transitive}: if $\ell_1 \neq \ell_2$ and  $\ell_1' \neq \ell_2'$ belong to $\P^1$, then there exists $A \in \SL(2,\C)$  such that $A \ell_i = \ell_i'$. Proposition \ref{borel subalgebras parameterisation} immediately tells us the following:

\begin{corollary}
    \label{two point transitivity}
    If $\b_1, \b_2, \b_1', \b_2'$ are Borel subalgebras of $\sl(2,\C)$ with  
    $\b_1 \neq \b_2$ and $\b_1' \neq \b_2'$, then there exists $A \in \SL(2,\C)$ such that $A \b_i A^{-1} = \b_i'$ for $i = 1,2$. In particular, $\SL(2,\C)$ acts transitively on the set of Borel subalgebras.
\end{corollary}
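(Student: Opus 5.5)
The corollary should follow from Proposition \ref{borel subalgebras parameterisation} together with the $2$-transitivity of the M\"obius action of $\SL(2,\C)$ on $\P^1$.

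First, use the bijection $\Phi$ to write $\b_i = \b_{\ell_i}$ and $\b_i' = \b_{\ell_i'}$ for uniquely determined lines $\ell_i, \ell_i' \in \P^1$. Since $\Phi$ is injective, the hypotheses $\b_1 \neq \b_2$ and $\b_1' \neq \b_2'$ give $\ell_1 \neq \ell_2$ and $\ell_1' \neq \ell_2'$.

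Next, apply $2$-transitivity to obtain $A \in \SL(2,\C)$ with $A\ell_i = \ell_i'$. If one wants to justify this concretely, choose nonzero $v_i \in \ell_i$ and $v_i' \in \ell_i'$. Then $\{v_1,v_2\}$ and $\{v_1',v_2'\}$ are bases of $\C^2$. Let $B$ be the matrix with $Bv_1 = v_1'$ and $Bv_2 = \lambda v_2'$, with $\lambda$ chosen so that $\det B = 1$. Such a $\lambda$ exists: scaling the image of $v_2$ by $\lambda$ scales the determinant by $\lambda$, so take $\lambda = 1/\det B_0$, where $B_0$ is the map with $\lambda = 1$.

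Finally, the $\SL(2,\C)$-equivariance of $\Phi$ gives $A\b_{\ell_i}A^{-1} = \b_{A\ell_i} = \b_{\ell_i'}$, which is the main claim. For the transitivity statement, given any two Borel subalgebras $\b, \b'$, take $\ell, \ell'$ with $\b = \b_\ell$ and $\b' = \b_{\ell'}$, and apply the above (or just $1$-transitivity) to get $A$ with $A\ell = \ell'$, so that $A\b A^{-1} = \b'$.

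There is no real obstacle here. The only points needing care are that injectivity of $\Phi$ is what turns distinctness of the Borel subalgebras into distinctness of the lines, and that the determinant can be normalised to $1$ so that $A$ lies in $\SL(2,\C)$ rather than $\mathsf{GL}(2,\C)$.
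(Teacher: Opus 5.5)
Your proposal is correct and follows the paper's argument: pass to $\P^1$ via the equivariant bijection $\Phi$ (whose injectivity turns $\b_1 \neq \b_2$ into $\ell_1 \neq \ell_2$) and then apply $2$-transitivity of the M\"obius action. Your explicit construction of $A$, with the determinant normalised to $1$, simply proves the $2$-transitivity fact that the paper recalls without proof.
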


The following proposition is useful:

\begin{proposition}
    \label{every element lies in some Borel subalgebra}
    Let $X,Y \in \sl(2,\C)$.
    \begin{enumerate}[\normalfont (i)]
        \item\label{everyXinBorel} $X$ lies in some Borel subalgebra.
        \item\label{pBor2} If $X$ and $Y$ are linearly independent and $[X,Y] \in \C Y$, then $\det X \neq 0$ and $\det Y = 0$.
    \end{enumerate}
\end{proposition}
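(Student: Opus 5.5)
For part \eqref{everyXinBorel}, I will use the parameterisation of Proposition \ref{borel subalgebras parameterisation}. Since $X$ is a complex $2\times 2$ matrix, its characteristic polynomial has a root in $\C$, so $X$ has an eigenvector $v \neq 0$. Put $\ell = \C v \in \P^1$. Then $X\ell \subseteq \ell$, so $X \in \b_\ell$ by definition of $\b_\ell$.

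For part \eqref{pBor2}, the plan is to consider the span $\b := \C X + \C Y$. The hypothesis $[X,Y] \in \C Y$ shows that $\b$ is closed under the bracket, so it is a $2$-dimensional subalgebra, hence a Borel subalgebra. By Corollary \ref{two point transitivity} (transitivity), after conjugating by some $A \in \SL(2,\C)$ I may assume that $\b$ is the upper triangular algebra
$$\b = \left\{ \begin{pmatrix} a & b \\ 0 & -a \end{pmatrix} : a,b \in \C\right\} = \C H + \C E.$$
Conjugation preserves determinants, brackets and linear independence, so nothing is lost. In this algebra $[\b,\b] = \C E$.

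Since $[X,Y] \in \C Y \cap [\b,\b] = \C Y \cap \C E$, I split into two cases.

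\emph{Case 1: $[X,Y] \neq 0$.} Then $Y$ is a nonzero multiple of $E$. Hence $Y$ is nilpotent and $\det Y = 0$. Writing $X = aH + bE$, independence from $Y$ forces $a \neq 0$, so $\det X = -a^2 \neq 0$.

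\emph{Case 2: $[X,Y] = 0$.} Then $X$ and $Y$ commute inside $\b$. But the centraliser in $\b$ of any nonzero element of $\b$ is $1$-dimensional. Explicitly, $[aH+bE, a'H+b'E] = 2(ab'-a'b)E$, which vanishes only when the two elements are proportional. This contradicts linear independence, so Case 2 cannot occur.

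So the conclusion follows from Case 1. The only step that needs care is this case split: one must rule out the commuting case before concluding that $Y \in \C E$. Everything else is a routine computation in the standard basis.
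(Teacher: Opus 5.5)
Your proposal is correct and follows the paper's proof: part~(i) uses an eigenvector of $X$, and part~(ii) conjugates $\C\{X,Y\}$ to $\C\{H,E\}$ and computes there. The only difference is cosmetic: you get $Y \in \C E$ from $[\b,\b] = \C E$ after ruling out $[X,Y]=0$, whereas the paper notes that $[H,Y] \in \C Y$ forces $Y \in \C H$ or $Y \in \C E$, and then excludes $Y \in \C H$.
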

\begin{proof}
    To see \eqref{everyXinBorel}, recall that every endomorphism $X:\C^2 \rightarrow \C^2$ has an eigenvector $v$, so $X \in \b_{\C v}$.
    
    Let us show \eqref{pBor2}. Observe that $\C\{X,Y\}$ is a Borel subalgebra because it is a 2-dimensional subalgebra. Since matrix conjugation preserves determinants, and $\SL(2,\C)$ acts transitively on the set of Borel subalgebras of $\sl(2,\C)$, we may assume that 
    $$\C\{X,Y\} = \C\{H,E \}.$$
    Since $\det(aH  + bE) = -a^2$, we are done if we show that $Y$ belongs to $\C E$. Observe that $[H,Y] \in \C Y$ (write $H$ in terms of $X$ and $Y$), so either $Y \in \C H$ or $Y \in \C E$. For the sake of contradiction, suppose $Y = \lambda H$ for some non-zero $\lambda \in \C$, and write $X = a H  + b E$ for $a,b \in \C$ with $b \neq 0$. Then $[X,Y] = -2 b \lambda E$, so $b = 0$, a contradiction.
\end{proof}

\section{Proof of Theorem \ref{main theorem}}
\label{section_main_theorem}
Let $\g$ denote the real Lie algebra $\sl(2,\C)$, and consider the following elements of $\Q(\g)$, i.e. subalgebras $\q$ of $\sl(2,\C)^\C = \sl(2,\C) \oplus \sl(2,\C)$ such that $\q \oplus \sigma \q = \sl(2,\C)^\C$:
\begin{align}
        \label{complex structures list}
        \begin{split}
            \I &:=\sl(2,\C) \oplus 0, \\
        \II_{\lambda} &:= \C\left\{(H,\lambda H), (E,0), (0,F) \right\} \qquad \text{for $ \lambda \in \C \text{ with $|\lambda| \neq 1$}$}, \\
        \III &:= \C\{(H,0), (E,0), (0,H + F)\}.
        \end{split}   
\end{align}

\begin{remark}
    The Lie group $\SL(2,\C)$ has a standard complex structure $J_{\std}$, obtained by viewing $\SL(2,\C)$ as a complex submanifold of $\C^{2 \times 2}$. This complex structure $J_{\std}$ is bi-invariant, and the corresponding $\i$-eigenspace is the subalgebra $\I \in \Q(\g)$.
\end{remark}

Let $\D = \{\lambda \in \C : |\lambda| < 1\}$. Thanks to Proposition \ref{prop_equivalences}, Theorem \ref{main theorem} is  an immediate consequence of Lemma \ref{lem_regularity} and the following:
\begin{theorem}
    \label{theorem classification}
    For the real Lie algebra $\g = \sl(2,\C)$, the $\Aut_\R(\g)$-orbit of each $\q \in \Q(\g)$ contains exactly one of $\I$, $\II_\lambda$ for $\lambda \in \D$, or $\III$.
\end{theorem}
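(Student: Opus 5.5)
Every $\q\in\Q(\g)$ is a $3$-dimensional complex subalgebra of $\sl(2,\C)\oplus\sl(2,\C)$ with $\q\cap\sigma\q=0$. I would classify such $\q$ by its projections. Let $\pi_1,\pi_2$ be the two projections, and let $\q_1=\q\cap(\sl(2,\C)\oplus 0)$ and $\q_2=\q\cap(0\oplus\sl(2,\C))$. Since $\sigma$ swaps the two factors, the condition $\q\cap\sigma\q=0$ forces $\q_1\cap\overline{\q_2}=0$, where $\overline{\q_2}$ denotes the entrywise conjugate, viewed inside the first factor; the same holds with the roles reversed. A dimension count gives $\dim\pi_1(\q)=3-\dim\q_2$. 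Moreover $\q_i$ is an ideal of the image $\pi_i(\q)$, which is a subalgebra of $\sl(2,\C)$ and so has dimension $0,1,2$ or $3$, the $2$-dimensional ones being Borel. The case split is then according to $(\dim\q_1,\dim\q_2)$, where $\dim\q_1+\dim\q_2\le 3$.

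\textbf{Case $\pi_1(\q)$ or $\pi_2(\q)$ is all of $\sl(2,\C)$.} Say $\pi_1(\q)=\sl(2,\C)$; the other case is handled by $\flip$. Then $\q_1$ is an ideal of $\sl(2,\C)$, so it is $0$ or $\sl(2,\C)$. If it is $\sl(2,\C)$, then $\q=\I$. If it is $0$, then $\q_2=0$ as well, since otherwise $\pi_1(\q)$ would have dimension at most $2$. So $\q$ is the graph of a homomorphism $\sl(2,\C)\to\sl(2,\C)$. This homomorphism is either $0$, which gives $\q=\I$ again, or an automorphism $\Ad_A$, which gives $\q=\{(X,AXA^{-1})\}$. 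The latter contains the diagonal-type vectors $(X,\overline X)$ after comparing with $\sigma\q=\{(\overline{A}^{-1}\overline Y\,\overline A,Y)\}$, and one checks that $\q\cap\sigma\q\neq 0$ because the equation $\overline A\,\overline{X}\,\overline{A}^{-1}\cdot$(conjugation)$=AXA^{-1}$ always has a nonzero solution $X$. So these graphs are excluded.

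\textbf{Case both projections have dimension at most $2$.} Then $\dim\q_1,\dim\q_2\ge1$, so $(\dim\q_1,\dim\q_2)$ is $(1,2)$, $(2,1)$ or $(1,1)$. The projections are then Borel subalgebras $\b,\b'$ of the two factors.
\begin{enumerate}[(a)]
\item \textbf{$(2,1)$, and symmetrically $(1,2)$.} Here $\q=\b\oplus\ell$ with $\ell$ a line. This $\ell$ is a subalgebra, and transversality requires $\ell\not\subset\overline{\b}$. Using Corollary \ref{two point transitivity} via $\theta_A$, I would normalise $\b=\b_{\C e_1}=\C\{H,E\}$. The remaining freedom is the stabiliser of $\b$, i.e.\ upper triangular $A$, acting on the line $\ell=\C Y$ by $\overline A\,Y\,\overline A^{-1}$. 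The condition $Y\notin\C\{H,E\}$ means the lower-left entry of $Y$ is nonzero. Upper triangular conjugation can make the diagonal of $Y$ equal to either $0$ or $H$, after scaling. So $Y$ becomes $F$ or $H+F$, with $\det$ distinguishing the two. Now $Y=F$ is nilpotent, and $\C\{(H,0),(E,0),(0,F)\}$ is $\II_0$. The case $Y=H+F$ gives $\III$.
\item \textbf{$(1,1)$.} Here $\q$ contains $\q_1\oplus\q_2$ and is contained in $\b\oplus\b'$, with $\q_i$ an ideal of $\b$ or $\b'$. By Proposition \ref{every element lies in some Borel subalgebra}\eqref{pBor2}, each $\q_i$ is the nilradical, so $\q_1=\C E_{\b}$ and $\q_2=\C E_{\b'}$. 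The quotient $\q/(\q_1\oplus\q_2)$ is then a graph identifying the Cartan parts, i.e.\ a vector $(H_\b,\lambda H_{\b'})$. Transversality forces $\b\neq\overline{\b'}$ in a suitable sense. By $2$-transitivity, normalise $\b=\C\{H,E\}$ and $\b'=\C\{H,F\}$; a further computation shows the generator can be taken of the form $(H,\lambda H)$, and one should verify that $\lambda=0$ falls under case (a). Then $\q=\II_\lambda$. The condition $\q\cap\sigma\q=0$ becomes $|\lambda|\ne1$. The residual symmetry $\flip\circ\theta_A$, with $A$ the Weyl element, sends $\II_\lambda$ to $\II_{1/\lambda}$ up to scaling, so we may take $\lambda\in\D$.
\end{enumerate}

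\textbf{Uniqueness.} The three types are distinguished by invariants. $\I$ is an ideal. For $\III$, the nonzero intersections with the factors have dimensions $(2,1)$ and the line is semisimple. $\II_0$ has the same dimensions but a nilpotent line. $\II_\lambda$ with $\lambda\neq0$ has dimensions $(1,1)$. It remains to show that $\II_\lambda\sim\II_\mu$ with $\lambda,\mu\in\D$ forces $\lambda=\mu$. For this I would compute the stabiliser of the normalised pair $(\b,\b')$: it is the diagonal torus, possibly together with $\flip$ composed with a Weyl element. These automorphisms fix $\lambda$ or send it to $\overline\lambda^{\,-1}$, which is consistent with $1/\lambda$ after accounting for the conjugation. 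Pinning down this residual action exactly, including whether conjugation of $\lambda$ occurs, is the main obstacle. The same holds for the careful normalisation in case (b).
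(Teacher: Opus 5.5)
Your case split by $(\dim\q_1,\dim\q_2)$ is the paper's split by $(d_1,d_2)$, since $\dim\q_2=k_1=3-d_1$. Your existence arguments in (a) and (b) are essentially the paper's. Normalising $Y$ with the upper-triangular stabiliser of $\b$, instead of choosing a second Borel containing $Y$, is an equivalent variant.

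For distinguishing the \emph{types} you take a genuinely different and more elementary route. Every element of $\Aut_\R(\g)$ is $\theta_A$ or $\flip\circ\theta_A$. Hence the unordered pair $\{\dim\q_1,\dim\q_2\}$ is an invariant, and so is the nilpotency of the line in the $(2,1)$ case. These invariants correctly separate $\I$, $\III$, $\II_0$ and $\II_{\lambda\neq0}$. This avoids the paper's use of bi-invariance and of $N(\III)=\III$.

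Two steps, however, are not actually proved.

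\textbf{The step you leave open.} You need that $\II_\lambda\sim\II_\mu$ with $\lambda,\mu\in\D$ forces $\lambda=\mu$; this is precisely what `exactly one' requires within the family. Your worry about conjugation is unfounded: $\theta_A$ and $\flip$ are $\C$-linear on $\g^\C$, so $\lambda$ is never conjugated. (Even $\overline\lambda^{-1}$ would lie outside $\D$, so it could not identify two points of $\D$.) Your stabiliser idea closes the gap in a few lines. Take $\lambda,\mu\neq0$.
\begin{enumerate}[(i)]
\item An equivalence $\theta_A$ must satisfy $A\,\C\{H,E\}A^{-1}=\C\{H,E\}$ and $\overline A\,\C\{H,F\}\,\overline A^{-1}=\C\{H,F\}$. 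So $A$ preserves both $\C e_1$ and $\C e_2$, hence is diagonal. Then $\theta_A$ fixes $(H,\lambda H)$, and since $\II_\mu\cap(\C H\oplus\C H)=\C(H,\mu H)$, you get $\mu=\lambda$.
\item An equivalence $\flip\circ\theta_A$ forces $A$ to be antidiagonal. It then sends $(H,\lambda H)$ into $\C(H,\lambda^{-1}H)$, so $\mu=1/\lambda\notin\D$.
\end{enumerate}
The paper sidesteps this computation with an intrinsic invariant. On $\II_\lambda$, $\rest{\ad_X}{\q}$ has eigenvalues $0,2a,-2a\lambda$, so the abstract Lie algebra determines $\{-\lambda,-1/\lambda\}$ and hence $\lambda\in\D$.

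\textbf{The graph case.} Your formula for $\sigma\q$ is wrong; it should be $\{(\overline A Y\overline A^{-1},Y)\}$. Your displayed equation is not well formed, and the claim that it always has a nonzero solution is unjustified. The missing observation is that $(X,AXA^{-1})\in\sigma\q$ if and only if $X$ commutes with $B=\overline A A$. A nonzero traceless such $X$ always exists: take $X=B-\frac{\tr B}{2}I_2$ if $B\neq\pm I_2$, and any $X$ otherwise.
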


The proof of Theorem \ref{theorem classification} is structured as follows:
in Section \ref{subsection_reduction}, we show that every $\q \in \Q(\g)$ is equivalent to one of the subalgebras listed in (\ref{complex structures list}). In Section \ref{ssecNonEq}, we show that the subalgebras listed in (\ref{complex structures list}) are pairwise non-equivalent.

\subsection{Reduction to normal forms}
\label{subsection_reduction}
Let 
$\pi_1,\pi_2: \sl(2,\C)^\C = \sl(2,\C) \oplus \sl(2,\C) \rightarrow \sl(2,\C)$ be the projection maps onto the first and second summands, respectively. Fix $\q \in \Q(\g)$, and define
\begin{align*}
    d_i :=& \dim_\C \pi_i(\q), \quad k_i := \dim_\C\ker (\pi_i|_\q ) \qquad  \text{ for }i = 1,2.
\end{align*}
By the rank-nullity theorem, we have 
$3 = d_1 + k_1 = d_2 + k_2.$
Moreover, $$\ker (\pi_1|_\q ) \oplus \ker (\pi_2|_\q )  =  (\q \cap \ker \pi_1) \oplus (\q \cap \ker \pi_2)\leq \q,$$ so $k_1 + k_2 \leq 3$. 
It follows that $d_1 + d_2 \geq 3$: indeed, if $d_1 + d_2 \leq 2$, then $k_1 + k_2 \geq 4$, a contradiction.
By applying the automorphism $\flip:(X,Y) \mapsto (Y,X)$, we may assume that $d_1 \geq d_2$. The following proposition covers the remaining cases for $(d_1,d_2)$:
\begin{proposition}\label{pCstr}
    Fix $\q \in \Q(\g)$.
    \begin{enumerate}[\normalfont (i)]
        \item\label{pCstr1} If $d_1 = 3$, then $d_2 = 0$ and $\q=\I$.
        \item\label{pCstr2} If $(d_1,d_2) = (2,2)$, then $\q$ is equivalent to $\II_\lambda$ for $\lambda \in \C$ satisfying $|\lambda| < 1$ and $\lambda \neq 0$.
        \item\label{pCstr3} If $(d_1, d_2) = (2,1)$, then $\q$ is equivalent to either $\II_0$ or $\III$.
    \end{enumerate}
\end{proposition}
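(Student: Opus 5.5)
Throughout, I will use two facts about $\sigma(X,Y)=(\overline Y,\overline X)$. First, $\pi_2(\sigma\q)=\overline{\pi_1(\q)}$. Second, $\sigma$ carries $\q\cap(\sl(2,\C)\oplus 0)$ onto $\sigma\q\cap(0\oplus\sl(2,\C))$. In each case the plan is to pin down the shape of $\q$ from the projections and kernels, and then use the transversality condition $\q\cap\sigma\q=0$ to exclude degenerate configurations. The normal form is reached with some $\theta_A$ (and $\flip$), using that $\theta_A$ acts by $A$ on the first factor and by $\overline A$ on the second.

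\emph{Case \eqref{pCstr1}.} If $d_1=3$ then $k_1=0$, so $\q$ is the graph $\{(X,\varphi(X))\}$ of a linear map $\varphi:\sl(2,\C)\to\sl(2,\C)$. Since $\q$ is a subalgebra, $\varphi$ is a homomorphism. Because $\sl(2,\C)$ is simple, either $\varphi=0$, giving $\q=\I$, or $\varphi$ is an automorphism. In the latter case $\varphi=\Ad_B$ by the proof of Proposition \ref{prop_automorphisms}. I would then compute
$$\sigma(X,BXB^{-1})=(\overline B\,\overline X\,\overline B^{-1},\overline X),$$
so $\sigma\q$ is the graph of $\Ad_{\overline B}^{-1}$. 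Hence $\q\cap\sigma\q\neq 0$ as soon as some nonzero $X$ satisfies $\Ad_{\overline B B}X=X$. Such an $X$ always exists: take $X$ to be the traceless part of $\overline BB$ if this is nonzero, and any $X$ otherwise (then $\overline BB=\pm I_2$). This contradicts $\q\oplus\sigma\q=\sl(2,\C)^\C$, so $\varphi=0$, which also gives $d_2=0$.

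\emph{Case \eqref{pCstr2}.} Here $k_1=k_2=1$ and $\b_i:=\pi_i(\q)$ are Borel subalgebras. The kernel $\ker(\pi_1|_\q)=0\oplus L$ is an ideal of $\q$, so $L$ is a $1$-dimensional ideal of $\b_2$; hence $L$ is the nilradical $\mathfrak n_2$ of $\b_2$. Symmetrically, $\mathfrak n_1\oplus 0\subseteq\q$. Transversality forces $\b_1\neq\overline{\b_2}$, since otherwise $\sigma\q\subseteq\overline{\b_2}\oplus\overline{\b_1}=\b_1\oplus\b_2$ and $\q+\sigma\q$ would be at most $4$-dimensional. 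Writing $\b_i=\b_{\ell_i}$, this says $\ell_1\neq\overline{\ell_2}$. By $2$-transitivity there is $A$ with $A\ell_1=\C e_1$ and $A\overline{\ell_2}=\C e_2$. Then $\theta_A$ maps $\b_1$ to $\C\{H,E\}$ and $\b_2$ to $\C\{H,F\}$, so we may assume $\q\ni(E,0),(0,F)$. The third generator then reduces to $(aH,cH)$ with $a,c\neq0$, since both projections are $2$-dimensional. Setting $\lambda=c/a$ gives $\q=\II_\lambda$ with $\lambda\ne0$. Since $\sigma(H,\lambda H)=(\overline\lambda H,H)$, transversality gives $|\lambda|\neq1$. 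If $|\lambda|>1$, I would apply $\flip$ followed by $\theta_W$ with $W=\begin{pmatrix}0&1\\-1&0\end{pmatrix}=\overline W$, which sends $H\mapsto -H$ and exchanges $E$ and $F$ up to sign. This sends $\II_\lambda$ to $\II_{1/\lambda}$.

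\emph{Case \eqref{pCstr3}.} Here $k_2=2$, and $\ker(\pi_2|_\q)=\q\cap(\sl(2,\C)\oplus0)$ is $2$-dimensional inside $\b_1\oplus 0$, so it equals $\b_1\oplus0$. Also $k_1=1$ and $\pi_2(\q)=\C Y$, so $\q=(\b_1\oplus0)\oplus(0\oplus\C Y)$. Then $\sigma\q=(\C\overline Y\oplus0)\oplus(0\oplus\overline{\b_1})$, and transversality is equivalent to $Y\notin\overline{\b_1}$. Conjugating, we may take $\b_1=\C\{H,E\}=\overline{\b_1}$. The remaining freedom is $\theta_A$ with $A$ upper triangular, acting on $Y$ by $\overline A$, which is again upper triangular. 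So the task is to classify lines $\C Y$ with $Y\notin\C\{H,E\}$ under conjugation by the upper triangular group, which fixes $\C e_1$. The condition $Y\notin\C\{H,E\}$ means $e_1$ is not an eigenvector of $Y$.

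If $\det Y=0$, then $Y$ is nilpotent with a single eigenline $\neq\C e_1$. If $\det Y\neq0$, then $Y$ has two distinct eigenlines, both $\neq\C e_1$. The upper triangular group acts $2$-transitively on $\P^1\setminus\{\C e_1\}$ by affine maps. Hence the nilpotent lines form one orbit, represented by $F$ and giving $\II_0$. The semisimple lines also form one orbit, represented by $H+F$ (eigenlines $\C e_2$ and $\C(2e_1+e_2)$), giving $\III$.

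I expect the main obstacle to be the careful identification of the kernels as ideals of the projected Borel subalgebras, together with the bookkeeping of the conjugation by $\overline A$ on the second factor. The fixed-vector argument excluding the isomorphism case in \eqref{pCstr1} is the one genuinely non-formal step.
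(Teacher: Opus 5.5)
Your proposal is correct and follows essentially the same route as the paper. In (i) you use the same argument: $\q$ is the graph of $\Ad_B$, and a fixed vector of $\Ad_{\overline B B}$ gives a contradiction. In (ii)--(iii) you also normalise the Borel projections via $2$-transitivity and then use the transversality condition $\q\cap\sigma\q=0$, including the same $\flip$-then-$\theta_W$ trick for $|\lambda|>1$.

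The differences are only in bookkeeping. You identify the kernel lines as nilradicals through the ideal property, where the paper uses the determinant criterion. In (iii) you classify $\C Y$ by its eigenlines under the affine action of the upper-triangular subgroup, whereas the paper places $X$ in a second Borel subalgebra and then conjugates by a diagonal element of $\SL(2,\C)$.
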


\begin{proof}[Proof of Proposition \ref{pCstr}(\ref{pCstr1})]
    In this case, $d_1 = 3$, so $\pi_1|_\q:\q \rightarrow \sl(2,\C)$ is an isomorphism. It follows that 
$$\q = \left\{ (X, \varphi X): X \in \sl(2,\C)\right\},$$
where $\varphi$ is the composition of Lie algebra homomorphisms 
$$\varphi:\sl(2,\C) \xrightarrow{(\pi_1|_\q)^{-1}} \q  \xrightarrow{\pi_2 |_\q} \sl(2,\C).$$
Because $\sl(2,\C)$ is simple, $\varphi$ is either zero, or a complex automorphism of $\sl(2,\C)$. If $\varphi = 0$, then $\q = \sl(2,\C) \oplus 0$ and we are done.

Let us show that the other case cannot happen. For the sake of contradiction, suppose $\varphi:\sl(2,\C) \rightarrow \sl(2,\C)$ is an automorphism. Let us reach a contradiction by showing that $\q \cap \sigma \q \neq 0$. By the proof of Proposition \ref{prop_automorphisms}, there exists a fixed $A \in \SL(2,\C)$ such that $\varphi(X) = AXA^{-1}$ for all $X \in \sl(2,\C)$. Thus, we may write
\begin{align*}
    \q &= \left\{ (X, AXA^{-1}) : X \in \sl(2,\C)\right\}, \\
    \sigma \q& = \left\{ ( \overline A \overline X \overline A^{-1}, \overline X) : X \in \sl(2,\C)\right\} = \left\{ (X, \overline A^{-1} X \overline A) : X \in \sl(2,\C)\right\}.
\end{align*}
Therefore, to show $\q \cap \sigma \q \neq 0$, it suffices to find a non-zero $X \in \sl(2,\C)$ such that $AXA^{-1} = \overline A^{-1} X \overline A$, which is equivalent to $BX = XB$, where $B  = \overline A A$. If $B = \pm I_2$, then any $X \in \sl(2,\C)$ commutes with $B$. If $B \neq \pm I_2$, then $X  = B - \frac{\tr B}{2} I_2$ is non-zero, traceless, and commutes with $B$. 
\end{proof}

\begin{proof}[Proof of Proposition \ref{pCstr}(\ref{pCstr2})] 
    In this case, $(d_1, d_2) = (2,2)$.
    Observe that $\b_1 = \pi_1(\q)$ and $\b_2 = \pi_2(\q)$ are Borel subalgebras. Let us show that $\b_1 \neq \overline{\b_2}$, where $\overline{(\cdot)}$ denotes entrywise matrix conjugation in $\sl(2,\C)$. For the sake of contradiction, suppose $\b_1 = \overline{\b_2}$. Observe that $\overline \b_2 = \overline{\pi_2(\q)} = \pi_1(\sigma \q)$. Therefore, 
$$\sl(2,\C) = \pi_1(\sl(2,\C)^\C) = \pi_1(\q \oplus \sigma \q) = \pi_1(\q) + \pi_1(\sigma \q) =  \pi_1(\q),$$
which contradicts $d_1 = 2$. Now, since $\b_1 \neq \overline{\b_2}$, Corollary \ref{two point transitivity} tells us that there exists $A \in \SL(2,\C)$ such that 
$$A \b_1 A^{-1} = \C\{H,E\}, \qquad A \overline{\b_2}A^{-1} = \C \{H,F\}.$$
Applying $\overline{(\cdot)}$ to the second equation gives $\overline A \b_2 \overline A^{-1} = \C\{H,F\}$. Thus, by replacing $\q$ with $\theta_A\q$, we assume that $\pi_1(\q) = \C\{H,E\}$ and $\pi_2(\q) =  \C\{H,F\}$. Since $k_1 = k_2 = 1$, there exist $X,Y,Z,W \in \sl(2,\C)$ such that 
$$\q = \C\left\{(X,Y), (Z,0), (0,W) \right\},$$
where $X,Z \in \C\{H,E\}$ are linearly independent, and $Y,W \in \C\{H,F\}$ are linearly independent. Since $\q$ is a subalgebra, $[(X,Y),(Z,0)]  = ([X,Z],0)$ must belong to $\q$. Thus, $[X,Z] \in \C Z$. By Proposition \ref{every element lies in some Borel subalgebra} \eqref{pBor2}, it follows that $\det(X) \neq 0$ and $\det(Z) = 0$. The  elements of $\C\{H,E\}$ with determinant zero are precisely the multiples of $E$. Therefore, by rescaling $Z$, we may assume that $Z = E$. By the same reasoning, we may assume that  $W = F$. By subtracting suitable multiples of $(E,0)$ and $(0,F)$ from $(X,Y)$ and rescaling $(X,Y)$, we may assume that $(X,Y) = (H, \lambda H)$ for some $\lambda \in \C$ with $\lambda \neq 0$. In summary, we have shown that, up to equivalence,
$$\q =\C\left\{(H, \lambda H), (E,0), (0,F) \right\} = \II_\lambda. $$

Let us show that $|\lambda| \neq 1$. Since $\q \cap \sigma \q = 0$, the vectors $(H, \lambda H)$ and $( \overline \lambda H, H) = \sigma (H, \lambda H)$ are linearly independent. In particular, 
$$\det\begin{pmatrix}
    1 & \lambda \\ \overline \lambda & 1
\end{pmatrix} = 1 - |\lambda|^2$$
is non-zero.  

We  now prove that the complex structures $\II_\lambda$ and $\II_{1/\lambda}$ are equivalent for any $\lambda \in \C$ with $\lambda \neq 0$ and $|\lambda | \neq 1$. 
    Consider the automorphisms $\flip:(X,Y) \mapsto (Y,X)$ and $\theta_A$, where $A = \begin{pmatrix}
    0 & 1 \\ -1 & 0
\end{pmatrix}$. Observe that $AHA^{-1} = -H$, $AEA^{-1} = -F$, $AFA^{-1} = -E$. Thus, we find 
    \begin{align*}
        \flip (\II_\lambda) &= \C\left\{(H, \textstyle\frac{1}{\lambda} H), (0,E), (F,0) \right\}, \\
        \theta_A \flip (\II_\lambda) &=  \C\left\{(H, \textstyle\frac{1}{\lambda} H), (E,0), (0,F) \right\} = \II_{1/\lambda}.
    \end{align*}
    This completes the proof.
\end{proof}

\begin{proof}[Proof of  Proposition \ref{pCstr}(\ref{pCstr3})]
    In this case, $(d_1,d_2) = (2,1)$, so there exists a Borel subalgebra $\b_1$ of $\sl(2,\C)$ and a non-zero $X \in \sl(2,\C)$ such that 
$$\q = \b_1 \oplus \C X.$$
Since $\sigma \q = \C \overline X \oplus \overline{\b_1}$, and $\q \cap \sigma \q = 0$, it follows that $\overline X$ does not lie in $\b_1$.
By Proposition \ref{every element lies in some Borel subalgebra} \eqref{everyXinBorel}, $X$ belongs to some Borel subalgebra $\b_2$. Since $\b_1 \neq \overline{\b_2}$, by Corollary \ref{two point transitivity}, there exists $A \in \SL(2,\C)$ such that 
$$A \b_1 A^{-1} = \C\{H,E\}, \qquad A \overline{\b_2}A^{-1} = \C \{H,F\}.$$
Applying $\overline{(\cdot)}$ to the second equation gives $\overline A \b_2 \overline A^{-1} = \C\{H,F\}$. Thus, by replacing $\q$ with $\theta_A \q$, we may assume that 
$$\q = \C\{H,E\} \oplus \C X,$$
where $X \in \C\{H,F\}$.
Now, if $X \in \C F$, then we may assume that $X =F$ by rescaling $X$. In this case, $\q = \II_0$.
Finally, suppose $X$ does not lie in $\C F$. Then, after rescaling $X$, we may assume that $X = H + z F$ for some $z \in \C$. Note that $z \neq 0$: if $z = 0$, then $(H,0) \in \q \cap \sigma \q$, which contradicts $\q \cap \sigma \q = 0$. Fix $w \in \C$ with $\overline w^2 = z$, and consider $A = \begin{pmatrix}
    w & 0 \\ 0 & w^{-1}
\end{pmatrix} \in \SL(2,\C)$. We find that
$$AHA^{-1} = H, \qquad AEA^{-1} =  \overline zE, \qquad  \overline A F \overline A^{-1} = z^{-1} F.$$
In particular, conjugation by $A$ preserves $\C\{H,E\}$. Therefore, 
$$\theta_A \q = \C\{H,E\} \oplus \C(H + F) = \III,$$
as desired.
\end{proof}

\subsection{Non-equivalence of the normal forms}
\label{ssecNonEq}
To finish the proof of Theorem \ref{theorem classification}, it remains to show the following:

\begin{proposition}
    \label{prop_non_equivalence}
The subalgebras $\I$, $\II_\lambda$ for $\lambda \in \D$, and $\III$ defined in (\ref{complex structures list}) are pairwise non-equivalent.
\end{proposition}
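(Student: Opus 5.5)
We will distinguish the subalgebras by invariants preserved under $\Aut_\R(\g) = \{\Id,\flip\}\cdot\{\theta_A : A \in \SL(2,\C)\}$. Each $\theta_A$ preserves the two summands of $\sl(2,\C)\oplus\sl(2,\C)$, while $\flip$ swaps them. Hence the unordered pair $\{d_1,d_2\}$ is an invariant of the orbit.

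For $\I$ this pair is $\{3,0\}$. For $\II_\lambda$ with $\lambda\neq 0$ it is $\{2,2\}$. For $\II_0$ and $\III$ it is $\{2,1\}$: indeed $\pi_2(\II_0)=\C F$ and $\pi_2(\III)=\C(H+F)$. It remains to separate the $\II_\lambda$ for $\lambda\in\D\setminus\{0\}$ among themselves, and to separate $\II_0$ from $\III$.

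\emph{$\II_0$ versus $\III$.} An equivalence between two subalgebras of type $(2,1)$ must match the $1$-dimensional projection with the $1$-dimensional projection, up to conjugation by some $\theta_A$ or $\flip\circ\theta_A$. Since conjugation preserves determinants, the condition ``the generator of the $1$-dimensional projection is nilpotent'' is invariant. Here $\det F=0$, whereas $\det(H+F)=-1\neq 0$, so $\II_0$ and $\III$ are not equivalent.

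\emph{The family $\II_\lambda$, $\lambda\in\D\setminus\{0\}$.} We extract an invariant from the Lie structure of $\q$. The derived algebra of $\II_\lambda$ is $[\q,\q]=\C\{(E,0),(0,F)\}$, and $\q/[\q,\q]$ is $1$-dimensional, spanned by $h=(H,\lambda H)$. Acting with $\ad h$ on $[\q,\q]$ gives eigenvalues $2$ on $(E,0)$ and $-2\lambda$ on $(0,F)$, with the eigenvector of eigenvalue $2$ lying in the first summand.

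Any automorphism $\theta_A$ carrying $\II_\lambda$ to $\II_\mu$ preserves the summands, so it sends the first-summand eigenvector to the first-summand eigenvector. Since $h$ is only determined modulo $[\q,\q]$ and up to scaling, the ratio of the second eigenvalue to the first, namely $-\lambda$, is invariant; this gives $\lambda=\mu$. If instead the equivalence involves $\flip$, the summands are exchanged and the ratio becomes $-1/\lambda$, which forces $\mu=1/\lambda$. This is impossible when both $\lambda$ and $\mu$ lie in $\D$.

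\emph{Main obstacle.} The delicate step is making this eigenvalue-ratio invariant rigorous. One must check that the eigenvalues of $\ad h$ on $[\q,\q]$ do not depend on the choice of representative $h$; they do not, because $[\q,\q]$ is abelian here. One must also check that the $\C$-linear extensions really preserve or swap the summand decomposition as claimed, which follows from the explicit formulas for $\theta_A$ and $\flip$.
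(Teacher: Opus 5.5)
Your argument is correct, but it takes a different route from the paper.

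The paper separates $\I$ as the only bi-invariant structure, and $\III$ as the only non-regular one. The latter rests on Lemma~\ref{lem_regularity}, whose proof shows $N(\III)=\III$ using that Borel and Cartan subalgebras of $\sl(2,\C)$ are self-normalising. The paper then separates the $\II_\lambda$ by showing they are pairwise non-isomorphic as \emph{abstract} complex Lie algebras: $\ad_X|_{\II_\lambda}$ has eigenvalues $0,2a,-2a\lambda$, and these determine $\lambda$ once $|\lambda|<1$ is imposed.

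You instead work directly with the explicit group $\Aut_\R(\g)=\{\Id,\flip\}\cdot\{\theta_A\}$. Because $\theta_A$ preserves and $\flip$ swaps the two summands, three quantities are orbit invariants: the unordered pair $\{d_1,d_2\}$, the nilpotency of the generator of the one-dimensional projection, and the summand-labelled eigenvalues of $\ad h$ on the abelian derived algebra. Your case analysis covers every pair.

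Your route is more elementary and self-contained. It needs neither the regularity lemma nor self-normalisation. It reuses exactly the data $(d_1,d_2)$ that organise the reduction in Section~\ref{subsection_reduction}. It also shows the $\lambda\leftrightarrow 1/\lambda$ ambiguity transparently as the effect of $\flip$.

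The paper's route uses intrinsic invariants (bi-invariance, regularity, abstract isomorphism type). These do not depend on the identification $\sl(2,\C)^\C\cong\sl(2,\C)\oplus\sl(2,\C)$ or on knowing $\Aut_\R(\g)$ explicitly. It also proves the stronger statement that the $\II_\lambda$, $\lambda\in\D$, are non-isomorphic even as abstract Lie algebras. Finally, the non-regularity of $\III$ that it invokes is needed anyway for Theorem~\ref{main theorem}.
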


Before proving Proposition \ref{prop_non_equivalence}, let us determine the regularity of the complex structures listed in (\ref{complex structures list}):

\begin{lemma}
\label{lem_regularity}
The complex structures corresponding to $\I$ and $\II_\lambda$ are regular. The complex structure corresponding to $\III$ is not regular.
\end{lemma}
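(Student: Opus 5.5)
We will use criterion (iii) of Proposition \ref{prop_general_regularity} throughout: $\q$ is regular if and only if there is a Cartan subalgebra $\h$ of $\g = \sl(2,\C)$ with $[\h^\C,\q] \subseteq \q$.

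\emph{The structures $\I$ and $\II_\lambda$.} The subalgebra $\I = \sl(2,\C)\oplus 0$ is an ideal of $\sl(2,\C)^\C$, so $J$ is bi-invariant and hence regular. For $\II_\lambda$, take $\h = \C H \subseteq \sl(2,\C)$, viewed as a real subalgebra. Since $\h^\C$ is spanned over $\C$ by $(H,0)$ and $(0,H)$, it is the diagonal Cartan subalgebra $\C H \oplus \C H$ of $\sl(2,\C)\oplus\sl(2,\C)$, so $\h$ is a Cartan subalgebra of $\g$. Each basis vector $(H,\lambda H)$, $(E,0)$, $(0,F)$ of $\II_\lambda$ is a weight vector for $\ad(H,0)$ and $\ad(0,H)$. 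Hence $[\h^\C,\II_\lambda]\subseteq \II_\lambda$.

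\emph{The structure $\III$.} Suppose, for contradiction, that $\h$ is a Cartan subalgebra of $\g$ with $[\h^\C,\III]\subseteq\III$. Then $\h^\C$ is a Cartan subalgebra of $\sl(2,\C)\oplus\sl(2,\C)$, so $\h^\C = \C X_0 \oplus \C Y_0$, where $X_0$ and $Y_0$ are nonzero semisimple elements of $\sl(2,\C)$. Since $\h$ is real, $\sigma\h^\C = \h^\C$. The formula $\sigma(X,Y) = (\overline Y,\overline X)$ then forces $Y_0 \in \C\,\overline{X_0}$, so $\h^\C = \C X_0 \oplus \C\overline{X_0}$.

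\begin{enumerate}[(a)]
\item Because $(X_0,0)$ normalises $\III$ and $\pi_1(\III) = \C\{H,E\}$ is a Borel subalgebra, $X_0$ normalises $\C\{H,E\}$. A Borel subalgebra is self-normalising, so $X_0 \in \C\{H,E\}$.
\item Because $(0,\overline{X_0})$ normalises $\III$, we have $[\overline{X_0}, H+F] \in \C(H+F)$. Since $\overline{X_0}$ is semisimple and nonzero, $\overline{X_0}$ is then a multiple of $H+F$: its centraliser is one-dimensional, and a nonzero eigenvalue would make $H+F$ nilpotent, which it is not because $\det(H+F) = -1 \neq 0$.
\end{enumerate}

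Thus $X_0 \in \C(H+\overline F) = \C(H+F)$. But $H+F \notin \C\{H,E\}$, which contradicts (a). Hence $\III$ is not regular.

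The main obstacle is the step producing the description $\h^\C = \C X_0\oplus\C\overline{X_0}$. It needs the facts that Cartan subalgebras of a direct sum split as sums of Cartan subalgebras of the factors, and that $\sigma$-invariance pairs the two factors by complex conjugation. Both facts are standard, but they must be stated carefully.
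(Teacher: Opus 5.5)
Your proof is correct. For $\I$ and $\II_\lambda$ it is the paper's argument: the paper takes $\h = \R\{H, \i H\}$, which is your $\C H$ viewed as a real subalgebra.

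For $\III$ you use the same two ingredients as the paper. These are the self-normalisation of the Borel subalgebra $\C\{H,E\}$ in the first factor, and the fact that the line $\C(H+F)$ is its own normaliser in the second factor. You package them differently, though.

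The paper computes the whole normaliser,
$$N(\III) = N_{\sl(2,\C)}(\C\{H,E\}) \oplus N_{\sl(2,\C)}(\C(H+F)) = \III.$$
It then observes that $\III \cap \sigma\III = 0$, so $\III$ contains no nonzero real vector. But regularity would give $\h \subseteq \h^\C \subseteq N(\III)$, and $\h$ is a nonzero set of real vectors.

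This route bypasses the step you flagged as the main obstacle. It needs neither the splitting of Cartan subalgebras of $\sl(2,\C)\oplus\sl(2,\C)$ nor the $\sigma$-pairing of the two factors. It uses only that $\h$ is a nonzero real subspace. So the paper in fact proves more: no nonzero element of $\g$ normalises $\III$.

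Your explicit description $\h^\C = \C X_0 \oplus \C\overline{X_0}$ does the same job more concretely. Your conclusions $X_0 \in \C\{H,E\}$ and $\overline{X_0} \in \C(H+F)$ say exactly that the real vector $(X_0,\overline{X_0}) \in \h$ lies in $\III$. Your final contradiction is then the paper's observation that $\III \cap \sigma\III = 0$, applied to that one vector.
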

\begin{proof}
    First, $\I \in \Q(\g)$ is regular because the corresponding complex structure $J_{\std}$ is bi-invariant. Next, $\h = \R\{H, \i H\}$ is a (real) Cartan subalgebra of $\g = \sl(2,\C)$ with $\h^\C = \C H \oplus \C H$. It is easy to see that $[\h^\C, \II_\lambda] \subseteq \II_\lambda$, so $\II_\lambda \in \Q(\g)$ is regular.

    Finally, to show that $\III$ is not regular, it suffices to show that its normaliser in $\sl(2,\C)^\C$ is $N(\III) = \III$. Indeed, if this is the case, then $N(\III)$ cannot contain any non-zero real vectors. To this end, observe that 
    $\III = \b \oplus \C Z$, where $\b = \C\{H,E\}$ is a Borel subalgebra and $Z = H + F$. The line $\C Z$ is a (complex) Cartan subalgebra of $\sl(2,\C)$ because $\det Z \neq 0$. Thus,
    $$N(\III) = N_{\sl(2,\C)} (\b) \oplus N_{\sl(2,\C)} (\C Z) = \b \oplus \C Z = \III,$$
    where the second equality follows because Borel subalgebras are self-normalising by \cite[Chapter 16.3 Lemma A]{humphreys}, and Cartan subalgebras are self-normalising (by definition).
\end{proof}

\begin{proof}[Proof of Proposition \ref{prop_non_equivalence}]
    First, among the complex structures listed in  (\ref{complex structures list}), $\I$ is the only bi-invariant complex structure, and $\III$ is the only non-regular complex structure. These two  properties are preserved by equivalence. 
    Thus, it remains to show that $\II_\lambda$ and $\II_{\lambda'}$ are not equivalent for distinct $\lambda, \lambda' \in \D$; it suffices to show that they are not isomorphic as abstract complex Lie algebras. We are done if we 
    show that $\lambda \in \D$ can be recovered from the abstract Lie algebra $\II_\lambda$.
    
 For $\q = \II_\lambda$, let $e_1 = (H, \lambda H)$, $e_2 = (E,0)$, and $e_3 = (0,F)$. For $X \in \q$, we can write $X = a e_1 + b e_2 + c e_3$, where $a,b,c \in \C$. In the ordered basis $(e_1,e_2,e_3)$, we have 
    $$\rest{\ad_X}{\q} = \begin{pmatrix}
        0 & 0 &0 \\ -2b & 2a & 0 \\ 2c & 0 & - 2a \lambda
    \end{pmatrix}. $$
 In particular, $\rest{\ad_X}{\q}$  has eigenvalues $0, 2a, -2a \lambda$. It follows that $\lambda \in \D$ can be recovered from the  abstract Lie algebra $\II_\lambda$.
\end{proof}

\section{Topology of \texorpdfstring{$\J(\g) / \Aut(\G)$}{J(g)/Aut(G)}}
\label{section_topology}
In this section, we determine the topology of left-invariant complex structures on the real Lie group $G = \SL(2,\C)$ modulo automorphisms, i.e. $\J(\g)/ \Aut(\G)$. We endow $\J(\g) \subseteq \End(\g)$ with the subspace topology, and $\J(\g) / \Aut(\G)$ with the quotient topology.
Equipping $\Q(\g) \subseteq \Gr_n(\g^\C)$ with the subspace topology, the bijection $\J(\g) \cong \Q(\g)$ given in Proposition \ref{prop_bijection_J_Q} is a homeomorphism. Thus, by Proposition \ref{prop_equivalences}, as topological spaces, we can write
$$\J(\g) / \Aut(\G) = \J(\g)/ \Aut_\R(\g) \cong \Q(\g) / \Aut_\R(\g),$$
where the sets above are equipped with the quotient topology.  By Theorem \ref{theorem classification}, we have
$$\M := \Q(\g) / \Aut_\R(\g) = \{[\I]\} \sqcup \{[\II_\lambda]\}_{\lambda \in \D} \sqcup \{[\III]\},$$
where $[\q]$ denotes the $\Aut_\R(\g)$-orbit of $\q \in \Q(\g)$. To simplify notation, we identify the disk $\D \cong \{[\II_\lambda]\}_{_{\lambda \in \D}}$ via $\lambda \leftrightarrow [\II_\lambda]$.

\begin{theorem}
    \label{theorem topology}
    For the real Lie algebra $\g = \sl(2,\C)$, a subset $U \subseteq \Q(\g) \, / \Aut_\R(\g)$ is open in the quotient topology if and only if the following two properties hold:
    \begin{itemize}
        \item\label{top1} $U \cap \D$ is open in the usual topology of $\D$.
        \item\label{top2} If $[\II_0] \in U$, then $[\III] \in U$. 
    \end{itemize}
\end{theorem}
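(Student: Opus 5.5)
Write $p:\Q(\g)\to\M$ for the quotient map, so that $U\subseteq\M$ is open exactly when $p^{-1}(U)$ is open in $\Q(\g)$. The proof splits into a local description of $\Q(\g)$ near each normal form, which gives the ``if'' direction, and two limiting statements, which give the ``only if'' direction.

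\textbf{Necessity.} First I would show that the orbit of $\III$ accumulates on $\II_0$. Put $A_t=\operatorname{diag}(t,t^{-1})$ with $t>0$ real, so $\overline{A_t}=A_t$. Then
\[
\theta_{A_t}\III=\C\{(H,0),(E,0),(0,H+t^{-2}F)\}=\C\{(H,0),(E,0),(0,t^2H+F)\}.
\]
As $t\to0$ this tends to $\C\{(H,0),(E,0),(0,F)\}=\II_0$ in $\Gr_3$. Consequently, if $p^{-1}(U)$ is open and contains $\II_0$, it contains $\theta_{A_t}\III$ for small $t$, and hence $[\III]\in U$.

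Next I would check that $\lambda\mapsto\II_\lambda$ is continuous from $\D$ into $\Q(\g)$. This is clear from the spanning set $(H,\lambda H),(E,0),(0,F)$. Therefore $U\cap\D$ is the preimage of the open set $p^{-1}(U)$ under this continuous map, so it is open in $\D$.

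\textbf{Sufficiency.} Assume the two conditions hold; I must show that $p^{-1}(U)$ is open in $\Q(\g)$. I would use the invariants $d_1,d_2$ from Section~\ref{subsection_reduction}.

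\emph{Lower semicontinuity of $d_1,d_2$.} The quantities $d_i=\rank(\pi_i|_\q)$ are lower semicontinuous on $\Gr_3$. It follows that $\{d_1=3\}\cup\{d_2=3\}=\Aut_\R(\g)\cdot\I$ is open. So if $[\I]\in U$, a neighbourhood of the orbit of $\I$ lies in $p^{-1}(U)$. Likewise, the set $\{(d_1,d_2)=(2,2)\}$, which is $p^{-1}(\D\setminus\{0\})$, is open.

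\emph{The open stratum.} On $\{(d_1,d_2)=(2,2)\}$ I would show that $\q\mapsto\lambda(\q)\in\D\setminus\{0\}$ is continuous. Here $\lambda(\q)$ is defined, as in the proof of Proposition~\ref{prop_non_equivalence}, from the eigenvalue ratio of $\ad_X|_\q$ for a generic $X$, normalised so that $|\lambda|<1$ (this absorbs the ambiguity $\lambda\leftrightarrow1/\lambda$). An invariant-theoretic description makes the continuity plausible: $\q/[\q,\q]$ is one-dimensional and acts on $[\q,\q]$ with eigenvalues $2a$ and $-2a\lambda$, so one can take $\lambda$ to be minus the ratio of these eigenvalues, normalised into the disk. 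Continuity then follows from continuity of eigenvalues. With this in hand, if $U\cap\D$ is open, its preimage inside this open stratum is open.

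\emph{The main obstacle: neighbourhoods of $\II_0$ and $\III$.} Suppose $[\II_0]\in U$ (so $[\III]\in U$ as well) and $U\cap\D\supseteq\{|\lambda|<\epsilon\}$. I need to show that every $\q$ close to $\theta(\II_0)$ or to $\theta(\III)$ lies in $p^{-1}(U)$. Acting by $\theta^{-1}$, it is enough to treat $\q$ near $\II_0$ or near $\III$.
\begin{itemize}
\item Nearby $\q$ have $d_1\ge2$. If $\q$ is of type $\II_\mu$, I must show $|\mu|$ is small. The plan is to use the invariant just defined: for $\q$ near $\II_0$ the eigenvalue ratio on $[\q,\q]$ is near $0$.
\item If $\q$ is of type $\I$, this cannot happen near $\II_0$, since $\II_0$ is not in the closure of the orbit of $\I$ (that orbit is closed, being $\{\q\text{ ideal}\}$).
\item Otherwise $\q$ is of type $(2,1)$ and hence equivalent to $\II_0$ or $\III$, both of which lie in $U$.
\end{itemize}
Near $\III$ there is a subtlety: $\III$ has eigenvalue ratio $0$, since $\III$ has $[\q,\q]$ spanned by $(E,0)$ only. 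I would therefore need the invariant to extend continuously to a semicontinuous quantity near points of type $(2,1)$. I expect to handle this by computing the characteristic polynomial of $\ad$ on the derived algebra of nearby subalgebras, and showing that the nonzero eigenvalue ratio tends to $0$. This uniform estimate near the degenerate stratum is the step I expect to be hardest.
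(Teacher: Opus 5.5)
Your necessity argument is correct and is essentially the paper's: the map $\lambda\mapsto\II_\lambda$ is continuous, and a curve in the orbit of $\III$ converges to $\II_0$. The openness of $[\I]$ and of the stratum $\{(d_1,d_2)=(2,2)\}=p^{-1}(\D\setminus\{0\})$ is also fine. The gap is in sufficiency. You analyse neighbourhoods of $\III$ only under the assumption $[\II_0]\in U$. But $U=\{[\III]\}$ satisfies both conditions of the theorem, and so does $\{[\III]\}\cup V$ with $V\subseteq\D\setminus\{0\}$ open. For such $U$ you must show that the orbit $[\III]$ is itself open in $\Q(\g)$. That is, every $\q$ near $\III$ must be equivalent to $\III$: no subalgebra of type $\II_0$, and none of type $\II_\mu$ with $\mu\neq 0$, may lie near $\III$.

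Your planned estimate does not give this. Showing that nearby type-$\II_\mu$ subalgebras have eigenvalue ratio tending to $0$ still allows such points to exist. More fundamentally, no invariant built from eigenvalues of $\ad$ can separate $\III$ from $\II_0$. Both are isomorphic to $\b\oplus\C$ as abstract Lie algebras, with a central summand, and both have ratio $0$. The difference lies in how $\q$ sits inside $\sl(2,\C)\oplus\sl(2,\C)$.

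The missing idea is the one the paper uses to show that $\{[\III]\}$ is open. On the open set $\{d_1=2\}$, which is $\{d_1\ge 2\}\setminus\{\I\}$, the line $\q\cap\ker\pi_1=\C(0,Z)$ depends continuously on $\q$, since it is the kernel of a constant-rank family. By the case analysis in Proposition~\ref{pCstr}, $\q\in[\III]$ if and only if $\det Z\neq 0$:
\begin{itemize}
\item In the $(2,2)$ case the kernel generator $W$ satisfies $[Y,W]\in\C W$ with $Y,W$ independent, so $\det W=0$.
\item In the $(2,1)$ case, $\det Z=0$ gives $\II_0$ and $\det Z\neq0$ gives $\III$.
\end{itemize}
Since $\det Z\neq 0$ is an open condition and holds at $\III$ (where $Z=H+F$), $[\III]$ contains a neighbourhood of $\III$. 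By invariance under $\Aut_\R(\g)$ the whole orbit is open. Once you have this, the step you flagged as hardest disappears, because nothing of type $\II_\mu$ lies near $\III$ at all.

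A smaller point concerns the neighbourhood of $\II_0$. There $[\q,\q]$ drops from dimension $2$ to $1$, so continuity should not be argued via eigenvalues on the derived algebra. Instead, use the characteristic polynomial of $\ad_X|_\q$ on all of $\q$, with $X$ chosen continuously and locally so that $\tr(\ad_X|_\q)\neq 0$. For example, the paper's invariant $\delta/\tau^2=-\lambda/(1-\lambda)^2$ has the following properties:
\begin{itemize}
\item it is symmetric under $\lambda\leftrightarrow 1/\lambda$;
\item it is continuous;
\item it vanishes on both $[\II_0]$ and $[\III]$.
\end{itemize}
Inverting it on $\D$ gives, in one step, continuity of $[\II_\lambda]\mapsto\lambda$, $[\III]\mapsto 0$ on $\Q(\g)\setminus[\I]$. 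Together with the openness of $[\I]$ and $[\III]$, this yields the whole ``if'' direction.
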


Before proving the theorem, we note some immediate consequences:

\begin{corollary}
    Let $\g$ denote the real Lie algebra $\sl(2,\C)$. 
    \begin{itemize}
        \item A left-invariant complex structure $J \in \J(\g)$ is regular if and only if the orbit of $J$ under $\Aut_\R(\g)$ is closed in $\J(\g)$.
        \item In $\J(\g)$, there is no continuous path from a bi-invariant complex  structure to one which is not bi-invariant.
        \item The orbit $[\III]$ is open in $\Q(\g)$, and its boundary is $[\II_0]$. 
    \end{itemize}
\end{corollary}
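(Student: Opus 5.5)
The corollary follows from Theorem \ref{theorem topology}, using that the quotient map $p:\Q(\g)\to\M$ is continuous and open (it is the orbit map of a group action). Under the homeomorphism $\J(\g)\cong\Q(\g)$ it suffices to argue in $\Q(\g)$, treating the three bullets in turn.

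\emph{First bullet.} The orbit of $\q$ is $p^{-1}([\q])$, so it is closed in $\Q(\g)$ exactly when $\M\setminus\{[\q]\}$ is open in $\M$. We check the two conditions of Theorem \ref{theorem topology}.
\begin{itemize}
\item For $[\q]=[\I]$, the complement is $\D\sqcup\{[\III]\}$. Its intersection with $\D$ is all of $\D$, and it contains $[\III]$, so it is open.
\item For $[\q]=[\II_\lambda]$, the complement meets $\D$ in $\D\setminus\{\lambda\}$, which is open. If $\lambda=0$ then $[\II_0]$ is not in the complement, and if $\lambda\neq0$ the complement contains $[\III]$; either way it is open.
\item For $[\q]=[\III]$, the complement contains $[\II_0]$ but not $[\III]$, so it is not open.
\end{itemize}
By Lemma \ref{lem_regularity} the regular structures are exactly those equivalent to $\I$ or some $\II_\lambda$. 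Hence the orbit is closed if and only if $J$ is regular.

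\emph{Second bullet.} Suppose $\gamma:[0,1]\to\Q(\g)$ is a continuous path with $\gamma(0)$ bi-invariant and $\gamma(1)$ not. Then $p\circ\gamma$ is a path in $\M$ from $[\I]$ to a point other than $[\I]$. It suffices to show that $\{[\I]\}$ is both open and closed in $\M$.
\begin{itemize}
\item It is open, since it meets $\D$ in the empty set and does not contain $[\II_0]$.
\item It is closed, since its complement $\D\sqcup\{[\III]\}$ is open, as shown above.
\end{itemize}
Connectedness of $[0,1]$ then gives a contradiction, because $(p\circ\gamma)^{-1}([\I])$ would be a nonempty proper clopen subset.

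\emph{Third bullet.} The set $\{[\III]\}$ is open in $\M$, since it meets $\D$ in the empty set and does not contain $[\II_0]$. Therefore $[\III]=p^{-1}(\{[\III]\})$ is open in $\Q(\g)$. Its boundary is its closure minus itself. The closure of $\{[\III]\}$ in $\M$ consists of the points all of whose open neighbourhoods contain $[\III]$.
\begin{itemize}
\item Every open set containing $[\II_0]$ contains $[\III]$, so $[\II_0]$ lies in the closure.
\item Each $\lambda\neq0$ has the open neighbourhood $\D\setminus\{0\}$, which misses $[\III]$.
\item $[\I]$ has the open neighbourhood $\{[\I]\}$, which misses $[\III]$.
\end{itemize}
So the closure of $\{[\III]\}$ in $\M$ is $\{[\II_0],[\III]\}$. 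Since $p$ is continuous and open, taking closures commutes with $p^{-1}$, so the closure of the orbit $[\III]$ in $\Q(\g)$ is $p^{-1}\{[\II_0],[\III]\}$. Removing the open orbit leaves the boundary, which is the orbit $[\II_0]$.

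The only slightly delicate step is the identity $\overline{p^{-1}(S)}=p^{-1}(\overline S)$. This holds for any continuous open map, so the argument is essentially routine once Theorem \ref{theorem topology} is available.
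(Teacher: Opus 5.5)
Your proof is correct and takes the same route as the paper, which states these three facts as immediate consequences of Theorem~\ref{theorem topology} without further argument; you supply the omitted details via the openness of the orbit map and the identity $\overline{p^{-1}(S)}=p^{-1}(\overline S)$. You also use, without stating it, that regularity and bi-invariance are preserved by automorphisms: this is what makes the bi-invariant structures exactly the orbit $[\I]$ and the regular ones exactly the orbits $[\I]$ and $[\II_\lambda]$, and it is asserted in the proof of Proposition~\ref{prop_non_equivalence}.
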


% Let $\rho:\Q(\g) \rightarrow  \M = \Q(\g) / \Aut_\R(\g)$ denote the natural projection: this is an open quotient map because the action of $\Aut_\R(\g)$ on $\Q(\g)$ is continuous.
The rest of this section is devoted to the proof of \Cref{theorem topology}.
Let  $\rho:\Q(\g) \rightarrow  \M = \Q(\g) / \Aut_\R(\g)$ denote the natural projection: this is an open quotient map because the action of $\Aut_\R(\g)$ on $\Q(\g)$ is continuous.
Let $\T_\D$ denote the usual topology on the disk, let $ \T_{\quo}$ denote the quotient topology on $\M$,  and consider the following topology on $\M$:
$$\T_{\cand} :=  \left\{ U \subseteq \M : U \cap \D \text{ is open in $\T_\D$, and if $0\in U$ then $[\III] \in U$}\right\}.$$
Theorem \ref{theorem topology} follows if we show that $\T_{\quo} = \T_{\cand}$.

\begin{remark}
    \label{rem_basis}
    A basis for $\T_\cand$ is given by the following sets:
    $$\{[\I]\}, \quad \{[\III]\}, \quad \text{$U \in \T_\D$ with $0 \notin U$}, \quad \text{$U \sqcup \{[\III]\}$ with $U \in \T_\D$ and $0 \in U$}.$$
\end{remark}

\begin{proposition}
    \label{prop topology}
    We have the following facts about the quotient topology $\T_\quo$:
    \begin{enumerate}[\normalfont (i)]
        \item\label{ptop1} The inclusion $\iota_\D: (\D, \T_\D) \xhookrightarrow{} (\M, \T_{\quo})$ is continuous.
        \item\label{ptop2} Every neighbourhood $U \in \T_\quo$ of $0 \in \D$ contains $[\III]$.
        \item\label{ptop3} The singleton $\{[\I]\}$ is both open and closed in $\T_\quo$.
        \item\label{ptop4} The singleton $\{[\III]\}$ is open in $\T_\quo$.
        \item\label{ptop5} The following map is continuous:
        $$\Lambda: (\D \sqcup \{[\III]\}, \T_\quo) \rightarrow (\D, \T_\D), \qquad \begin{cases}
            \lambda \mapsto \lambda & \forall \lambda \in \D, \\
            [\III] \mapsto 0.&
        \end{cases}$$
    \end{enumerate}
\end{proposition}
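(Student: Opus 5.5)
Since $\rho$ is an open quotient map, a set $U\subseteq\M$ is open in $\T_\quo$ exactly when $\rho^{-1}(U)$ is open in $\Q(\g)$. All five items will be reduced to local questions about $\Q(\g)\subseteq \Gr_3(\sl(2,\C)\oplus\sl(2,\C))$, using the explicit normal forms of Proposition \ref{pCstr} together with lower semicontinuity of rank.

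For \eqref{ptop1}: the map $\lambda\mapsto \II_\lambda$ is continuous from $\D$ into $\Q(\g)$, since its basis vectors depend continuously on $\lambda$. Composing with $\rho$ gives $\iota_\D$, which is therefore continuous.

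For \eqref{ptop2}: I would exhibit explicit elements of the orbit $[\III]$ converging to $\II_0$. Take $\III_t:=\C\{(H,0),(E,0),(0,H+tF)\}$ with $t\neq 0$. This is conjugate to $\III$ by the diagonal $\theta_A$ from the proof of Proposition \ref{pCstr}\eqref{pCstr3}. Rescaling the last vector, $\III_t=\C\{(H,0),(E,0),(0,t^{-1}H+F)\}$, which tends to $\II_0$ as $t\to\infty$. If $\rho^{-1}(U)$ is an open set containing $\II_0$, it therefore contains some $\III_t$, and so $[\III]\in U$.

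For \eqref{ptop3} and \eqref{ptop4}: the invariants $d_1$ and $d_2$ are lower semicontinuous on $\Q(\g)$, because $d_i$ is the rank of $\pi_i|_\q$. Hence $\{d_1=3\}\cup\{d_2=3\}$ is open. By Proposition \ref{pCstr}\eqref{pCstr1} this set equals $\rho^{-1}([\I])$ (it is $\I$ and $\flip\I$), so $\{[\I]\}$ is open. For closedness, its complement is $\{d_1\le2,\ d_2\le 2\}$. This is also open: a limit of structures $\q_n\cong\I$ with $d_1(\q_n)=3$ must be $\sl(2,\C)\oplus 0$ itself, since the orbit of $\I$ is finite, consisting of just $\I$ and $\flip\I$. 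So that orbit is closed.

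For $[\III]$, I would use the normaliser characterisation from Lemma \ref{lem_regularity}: the normaliser satisfies $\dim N(\q)=3$ for $\q\in[\III]$. For $\II_\lambda$ the normaliser contains $\h^\C$, so its dimension is at least $4$, and $\dim N(\I)=6$. The function $\q\mapsto\dim N(\q)$ is upper semicontinuous, since $N(\q)$ is the kernel of a linear map depending continuously on $\q$. Hence $\{\dim N=3\}=\rho^{-1}([\III])$ is open.

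For \eqref{ptop5}, the main obstacle: I need a continuous function on $\Q(\g)\setminus\rho^{-1}([\I])$ that recovers $\lambda$ and vanishes on $[\III]$. The proof of Proposition \ref{prop_non_equivalence} suggests one: for generic $X\in\q$, the nonzero eigenvalues of $\ad_X|_\q$ are $2a$ and $-2a\lambda$. Their ratio is $-\lambda$, or $-1/\lambda$ after $\flip$. For $\III$ one computes that this ratio is $0$ or $\infty$. The quantity must be made symmetric and well defined, for instance via the invariant $\mu=\lambda/(1+\lambda)^2$ of the unordered pair $\{\lambda,1/\lambda\}$. This $\mu$ is a continuous function of the characteristic polynomial coefficients, provided I choose $X$ continuously; for this I would use $X$ spanning a complement of the derived algebra $[\q,\q]$, which is $2$-dimensional for both $\II_\lambda$ and $\III$. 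Then $\mu$ is a continuous invariant function on $\rho^{-1}(\D\sqcup\{[\III]\})$. On $\D$ the map $\lambda\mapsto\mu$ is injective and open, since it is a homeomorphism onto its image, and it sends $[\III]\mapsto 0$. Composing with the inverse of this homeomorphism gives $\Lambda\circ\rho$ continuous, and hence $\Lambda$ is continuous. The delicate points are verifying that $\dim[\q,\q]=2$ holds uniformly on this set, and that the $\flip$ ambiguity is genuinely absorbed by $\mu$.
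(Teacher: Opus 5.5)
Your arguments for (i)--(iii) are essentially the paper's. Your proof of (iv) is a valid alternative route. The paper works on the open set $\{\q:\rank(\pi_1|_\q)=2\}$, uses continuity of $\q\mapsto\q\cap\ker\pi_1$, and exhibits a nonempty open subset of $\rho^{-1}([\III])$: the set where this line is spanned by some $(0,Z)$ with $\det Z\neq0$. It then uses openness of $\rho$. You instead combine $\q\subseteq N(\q)$ with upper semicontinuity of $\q\mapsto\dim N(\q)$, which shows that $\rho^{-1}([\III])=\{\dim N(\q)=3\}$ is itself open. This ties openness of the orbit directly to the non-regularity of $\III$. Both versions rely on the classification to identify the stratum.

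The gap is in (v), in how you choose $X$ continuously. You rely on the claim that $[\q,\q]$ is $2$-dimensional on $\rho^{-1}(\D\sqcup\{[\III]\})$, and this is false. We have $[\II_\lambda,\II_\lambda]=\C(E,0)+\C(0,\lambda F)$, which is $2$-dimensional only for $\lambda\neq0$, whereas $[\II_0,\II_0]=[\III,\III]=\C(E,0)$. So the dimension drops exactly on $\rho^{-1}(\{0,[\III]\})$, the non-Hausdorff locus where continuity of $\Lambda$ is the whole content of (v). There a complement of $[\q,\q]$ is a plane, not determined canonically or continuously. It can contain elements such as $(0,F)\in\II_0$, or the central element $(0,H+F)\in\III$, for which $\tr(\ad_X|_\q)=0$ and your ratio is undefined.

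The fix is to notice that no canonical $X$ is needed. If $0,\alpha,\beta$ are the eigenvalues of $\ad_X|_\q$ and $r=\alpha/\beta$, then $r/(1+r)^2=\alpha\beta/(\alpha+\beta)^2=\delta(\q,X)/\tau(\q,X)^2$. Here $\tau=\tr(\ad_X|_\q)$ and $\delta$ is the next characteristic coefficient. This quantity equals $-\lambda/(1-\lambda)^2$ on $[\II_\lambda]$ and $0$ on $[\III]$ for every $X$ with $\tau(\q,X)\neq0$. Note that the eigenvalue ratio on $\II_\lambda$ is $-\lambda$, so this is your $\mu$ applied to $r$ rather than to $\lambda$; the sign is harmless.

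No $\q\notin[\I]$ is unimodular. So near any $\q_0\notin[\I]$, some vector of a continuous local frame of the tautological bundle has $\tau\neq0$ at $\q_0$, and hence on a neighbourhood. On that neighbourhood, $\Lambda\circ\rho$ is the composition of $\delta/\tau^2$ along this section with $K^{-1}$, where $K(\lambda)=-\lambda/(1-\lambda)^2$ is the biholomorphism of $\D$ onto $\C\setminus[1/4,\infty)$. This is exactly the paper's argument.
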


\begin{remark}
    In \eqref{ptop5}, $\D \sqcup \{[\III]\}$ is equipped with the subspace topology induced by $\T_\quo$. Since $\{[\I]\}$ is closed in $\T_\quo$ by \eqref{ptop3}, this subspace topology  consists precisely of the elements of $\T_\quo$ contained in $\D \sqcup \{[\III]\}$.
\end{remark}

\begin{proof}[Proof of Theorem \ref{theorem topology}, assuming Proposition \ref{prop topology}]
    To show that $\T_\quo \subseteq \T_\cand$, suppose $U \in \T_\quo$. Property \eqref{ptop1} implies that $U \cap \D = \iota_\D^{-1}(U) \in \T_\D$. By  \eqref{ptop2}, $0 \in U$ implies $[\III] \in U$. Thus, $U \in \T_\cand$.

    To show that $\T_\cand \subseteq \T_\quo$, it suffices to show that $\T_\quo$ contains the basis elements of $\T_\cand$ listed in Remark \ref{rem_basis}. First, the singletons $\{[\I]\}, \{[\III]\}$ are open in $\T_\quo$ by \eqref{ptop3} and \eqref{ptop4}, respectively. Next, let $U \in \T_\D$. By \eqref{ptop5}, the following hold:  $0 \notin U$ implies $U=\Lambda^{-1}(U)  \in \T_{\quo}$, and $0 \in U$ implies $  U \sqcup \{[\III]\}  = \Lambda^{-1}(U)\in \T_{\quo}$.
\end{proof}

It remains to prove Proposition \ref{prop topology}. Henceforth, we assume that $\M$ is equipped with the quotient topology $\T_\quo$. 

First, let us introduce some notation: for an $n$-dimensional complex vector space $V$ and a positive integer $k \leq n$, let $\Gr_k(V)$ denote the Grassmannian of $k$-dimensional complex subspaces of $V$. Recall that the \textit{tautological bundle} $p:\CS \rightarrow \Gr_k(V)$ is the rank $k$ subbundle of the trivial bundle $\Gr_k(V) \times V$ defined by 
$$\CS := \left\{ (\q,X) \in \Gr_k(V) \times V : X \in \q \right\}.$$
Next, consider the \textit{Stiefel bundle} $q: \St_k(V) \rightarrow \Gr_k(V)$, where
\begin{align*}
    \St_k(V) :=& \left\{\text{Injective linear maps $A: \C^k \xhookrightarrow{} V$} \right\} \\
    =& \left\{ (v_1,\ldots,v_k) \in V^{\oplus k} : \text{$v_1,\ldots, v_k$ are linearly independent}\right\},
\end{align*}
and $q:\St_k(V) \rightarrow \Gr_k(V)$ is given by $A \mapsto \Im (A)$, the image of $A$. The total space $\St_k(V)$ is an open subset of $\Hom(\C^k, V)$ by \cite[Proposition B.25]{leesm}.

\begin{lemma}
    \label{lemma  grassmannian}
    Let $T:V \rightarrow W$ be a $\C$-linear map between finite-dimensional $\C$-vector spaces, and let $1 \leq d \leq k \leq \dim V$ be integers.
    \begin{enumerate}[\normalfont (a)]
        \item\label{lGr1} The subset $\mathcal U := \left\{ \q \in \Gr_k(V) : \rank(T|_\q) \geq d\right\}$ is open in $\Gr_k(V)$.
        \item\label{lGr2} Let $\mathcal D := \left\{ \q \in \Gr_k(V) : \rank(T|_\q) = d\right\}$. The following map is continuous:
        $$\eta: \mathcal D \rightarrow \Gr_{k-d}(\ker T), \qquad \q \mapsto \q \cap \ker T = \ker(T|_\q).$$
    \end{enumerate}
\end{lemma}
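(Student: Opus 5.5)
Both parts will be proved locally, working with the Stiefel bundle $q:\St_k(V)\to\Gr_k(V)$. Two standard facts are used throughout: $q$ is a continuous open surjection (indeed a quotient map), and it admits continuous local sections. For the sections, take a basis of $V$. Near a given $\q_0$, the subspaces that are graphs of linear maps from a fixed complement-adapted coordinate $k$-plane form an open chart $\mathcal V\ni\q_0$. The map sending such a graph to its standard spanning $k$-tuple is a continuous section $s:\mathcal V\to\St_k(V)$. It therefore suffices to prove each statement after composing with $s$, that is, for a continuously varying tuple $(v_1(\q),\dots,v_k(\q))$ spanning $\q$.

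\textbf{Part (a).} We have $\rank(T|_\q)=\rank\big(T\circ s(\q)\big)$, the rank of the $\dim W\times k$ matrix $\big(Tv_1(\q),\dots,Tv_k(\q)\big)$. This rank is at least $d$ exactly when some $d\times d$ minor is non-zero. Minors are continuous in $\q$ (via $s$), so the condition is open, and $\mathcal U\cap\mathcal V$ is open for every chart. Hence $\mathcal U$ is open. Alternatively, one can pull back along $q$ and use that $q$ is a quotient map, since $q^{-1}(\mathcal U)$ is the open set of tuples with $\rank(TA)\ge d$.

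\textbf{Part (b).} Fix $\q_0\in\mathcal D$ and a local section $s$ near $\q_0$, so that $A(\q)=s(\q)\in\Hom(\C^k,V)$ is continuous. Then $\ker(T|_\q)=A(\q)\big(\ker(TA(\q))\big)$, and $A(\q)$ is injective. So it suffices to show that $\q\mapsto\ker(TA(\q))\in\Gr_{k-d}(\C^k)$ is continuous on $\mathcal D\cap\mathcal V$. Indeed, applying the varying injection $A(\q)$ is continuous: it maps a continuous local frame of the kernel to a continuous frame of $\ker(T|_\q)$, and then one composes with $q$ for $\ker T$.

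The key step, which I expect to be the main obstacle, is the continuity of $\q\mapsto\ker M(\q)$ for a continuous family of matrices $M(\q)=TA(\q)$ of constant rank $k-(k-d)=d$. The constant-rank hypothesis is essential, since kernels jump when the rank drops. The plan is as follows. At $\q_0$, choose $d$ rows and $d$ columns of $M(\q_0)$ forming an invertible $d\times d$ minor; this minor stays invertible nearby. Reorder the columns so that $M=(M_1\;M_2)$ with the chosen columns in $M_1$. Because the rank is exactly $d$, the selected $d$ rows span the row space. Hence $\ker M(\q)$ equals the kernel of those $d$ rows, which is the graph
\[
\{(-B(\q)^{-1}C(\q)y,\;y): y\in\C^{k-d}\},
\]
where $(B\;C)$ are the selected rows split by column. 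This gives an explicit continuous frame for $\ker M(\q)$ near $\q_0$, namely the columns of $\begin{pmatrix}-B^{-1}C\\ I\end{pmatrix}$. Composing with $A(\q)$ and then with $q$ gives continuity of $\eta$ near $\q_0$, and since $\q_0\in\mathcal D$ was arbitrary, $\eta$ is continuous.
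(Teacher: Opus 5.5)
Your proof is correct and follows the paper's route: for (a) the paper uses exactly your alternative argument (pulling back along the open quotient map $q$ and using that $\{B : \rank B \geq d\}$ is open in $\Hom(\C^k,W)$), and for (b) it likewise takes continuous local frames of $\q \cap \ker T$ and composes with the projection $\St_{k-d}(\ker T) \to \Gr_{k-d}(\ker T)$. The only difference is that the paper obtains those frames by citing the general fact that the kernel of a constant-rank bundle map (here $T$ applied to the tautological bundle over $\mathcal D$) is a subbundle, whereas you prove that fact by hand in a graph-chart trivialisation, using an invertible $d\times d$ minor to get the explicit frame $\begin{pmatrix}-B^{-1}C\\ I\end{pmatrix}$.
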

\begin{proof}[Proof of Lemma \ref{lemma  grassmannian}]
    To show \eqref{lGr1}, it suffices to show that $q^{-1}(\mathcal U)$ is open in $\St_k(V)$ because $q:\St_k(V) \rightarrow \Gr_k(V)$ is an open quotient map. Observe that 
    \begin{align*}
        q^{-1}(\mathcal U) &= \left\{ A \in \St_k(V) : \rank(T \circ A) \geq d\right\} \\
        &= (L_T)^{-1} \left(\{B \in \Hom(\C^k, W) : \rank B \geq d\} \right),
    \end{align*}
    where $L_T: \St_k(V) \rightarrow \Hom(\C^k, W)$ is the continuous map $A \mapsto T \circ A$. The set $\{\rank B \geq d\}$ is open in $\Hom(\C^k, W)$ by \cite[Proposition B.25]{leesm}, so we are done.

    To show \eqref{lGr2}, observe that we have a continuous bundle homomorphism over $\mathcal D$
     $$\Psi:\CS|_{\mathcal D} \hookrightarrow \mathcal D \times V \xrightarrow{\displaystyle (\q,X) \mapsto (\q, TX)}\mathcal D \times W,$$
    %\begin{align*}
    %\Psi:\CS|_{\mathcal D} \hookrightarrow \mathcal D \times V&\rightarrow\mathcal D \times W,\\
    %(\q,X)& \mapsto (\q, TX),
    %\end{align*} 
    where the fibre maps are $\Psi_\q = T|_{\q} : p^{-1}(\q)=\q \rightarrow W$. By definition of $\mathcal D$, $\Psi$ has constant rank $d$, so $\ker \Psi$ is a rank $(k-d)$ subbundle of the trivial bundle $\mathcal D \times \ker T \rightarrow \mathcal D$, with fibres $(\ker \Psi)_\q = \q \cap \ker T$. To show that $\eta$ is continuous, let $s_1,\ldots, s_{k-d}:\mathcal O \subseteq \mathcal D \rightarrow  \ker \Psi \subseteq \mathcal D \times \ker T$ be a continuous local frame for $\ker \Psi$. Then $\eta|_{\mathcal O}$ is precisely the composition of the following continuous maps:
     $$\mathcal O \xrightarrow{\displaystyle \q \mapsto (s_1(\q),\ldots, s_{k-d}(\q))} \St_{k-d}(\ker T) \twoheadrightarrow \Gr_{k-d}(\ker T).$$
    %\begin{align*}
    %\mathcal O \rightarrow \St_{k-d}(\ker T) \twoheadrightarrow \Gr_{k-d}(\ker T),\\
    %\q &\mapsto (s_1(\q),\ldots, s_{k-d}(\q)).
    %\end{align*}
    This completes the proof.
\end{proof}

\begin{lemma}
    \label{lemma for v}
    For each $\q \in \Q(\g)$ and $X \in \q$, set $\ad^\q_X := \rest{\ad_X}{\q}: \q \rightarrow \q$.
    \begin{enumerate}[\normalfont (a)]
        \item\label{lv1} Let $\End(\CS|_{\Q(\g)}) \rightarrow \Q(\g)$ denote the endomorphism bundle of the tautological bundle $\CS|_{\Q(\g)} \rightarrow \Q(\g)$. The following map is continuous:
        $$\Phi: \CS|_{\Q(\g)} \rightarrow \End(\CS|_{\Q(\g)}), \qquad (\q,X) \mapsto \ad^\q_X : \q \rightarrow \q.$$
        \item\label{lv2} The maps $\tau, \delta: \CS|_{\Q(\g)} \rightarrow \C$ given by
        $$\tau(\q,X) := \tr \ad^\q_X, \qquad \delta(\q,X) := \frac12 \left( \tau(\q,X)^2 - \tr ((\ad^\q_X)^2)\right)$$
        are continuous and invariant under the action of $\Aut_\R(\g)$ on $\CS|_{\Q(\g)}$ given by $\varphi \cdot (\q,X) = (\varphi \q, \varphi X)$.
        \item\label{lv3} The set $\mathcal O = \left\{(\q, X) \in \CS|_{\Q(\g)} : \tau(\q,X) \neq 0\right\}$ is open, and the following map is continuous:
        $$R:\mathcal O \rightarrow \C, \qquad  R(\q,X) := \frac{\delta(\q,X)}{\tau(\q,X)^2}.$$
        \item\label{lv4} Fix $(\q, X) \in \mathcal O$. Then $\q \in \Q(\g) \backslash [\I]$, and 
        $$R(\q,X) = \begin{cases}
            -\lambda / (1- \lambda)^2
             & \text{if $\q \in [\II_\lambda]$, } \\
            0 & \text{if $\q \in [\III]$.}
        \end{cases}$$
        \item\label{lv5} The map $K: \D \rightarrow \C \backslash [1/4, \infty)$ given by $\lambda \mapsto -\lambda / (1- \lambda)^2$ is a biholomorphism.
    \end{enumerate}
\end{lemma}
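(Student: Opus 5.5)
We prove the five parts of Lemma \ref{lemma for v} in order, treating (a)--(c) as routine continuity statements and (d) as the computational core.

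For \eqref{lv1}, work locally. Over a small open set $\mathcal V \subseteq \Q(\g)$, choose a continuous local frame $s_1,\ldots,s_3$ of $\CS$, for instance by taking a local section of the Stiefel bundle $q$. Write $X = \sum x_j s_j(\q)$; the coefficients $x_j$ depend continuously on $(\q,X)$. For each $j$, the element $[X, s_j(\q)] \in \q$ also depends continuously on $(\q,X)$, since the bracket on $\g^\C$ is continuous. Its coordinates in the frame $s_1(\q),\ldots,s_3(\q)$ are obtained by applying the inverse of the continuous family of injective maps $\C^3 \to \g^\C$ to this element. This gives a continuous matrix for $\ad^\q_X$ in the local trivialisation of $\End(\CS)$.

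For \eqref{lv2}, continuity follows because trace and products are continuous on $\End(\CS)$. For invariance, take $\varphi \in \Aut_\R(\g)$. Its $\C$-linear extension is a Lie algebra isomorphism $\q \to \varphi\q$, so $\ad^{\varphi\q}_{\varphi X} = \varphi \circ \ad^\q_X \circ \varphi^{-1}$. The quantities $\tau$ and $\delta$ are the first two elementary symmetric functions of the eigenvalues, hence conjugation invariant. Part \eqref{lv3} is then immediate: $\mathcal O$ is the preimage of $\C\setminus\{0\}$ under the continuous map $\tau$, and $R$ is a quotient of continuous functions with nonvanishing denominator.

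For \eqref{lv4}, first note that $\I = \sl(2,\C)\oplus 0$ is simple, so every $\ad_X$ is traceless and $\tau \equiv 0$ on the orbit $[\I]$, which lies outside $\mathcal O$. By \eqref{lv2} and Theorem \ref{theorem classification}, it suffices to compute on the normal forms.

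For $\II_\lambda$, the matrix in the proof of Proposition \ref{prop_non_equivalence} has eigenvalues $0, 2a, -2a\lambda$. This gives $\tau = 2a(1-\lambda)$ and $\delta = -4a^2\lambda$, so $R = -\lambda/(1-\lambda)^2$. Here $\lambda \ne 1$ since $\lambda\in\D$, and $\tau\neq0$ forces $a\neq0$. This formula covers $\lambda=0$, giving $R=0$ on $[\II_0]$.

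For $\III = \C\{H,E\}\oplus\C Z$ with $Z = H+F$, write $X = (aH+bE, cZ)$. Then $\ad^\III_X$ is block diagonal. On the Borel block it is triangular with eigenvalues $0$ and $2a$. On the one-dimensional block $\C Z$ it acts by $[cZ,Z]=0$. So the eigenvalues are $0, 2a, 0$, giving $\tau = 2a$, $\delta = 0$, and $R=0$.

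This step is the main point to check carefully: the eigenvalue computation must be done on representatives of the orbits, and the invariance from \eqref{lv2} transfers it to all of $[\II_\lambda]$ and $[\III]$.

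For \eqref{lv5}, the map $K(\lambda) = -\lambda/(1-\lambda)^2$ is a rescaled Koebe function. We show it is injective on $\D$ and identify its image. Setting $w = -K$, the equation $w(1-\lambda)^2 = \lambda$ is a quadratic in $\lambda$ whose two roots have product $1$. So for each $w$ there is at most one root in $\D$, and there is exactly one unless both roots lie on the unit circle. Roots $\lambda = e^{i\theta}$ with $\lambda\neq1$ give $w = \lambda/(1-\lambda)^2 = -1/(4\sin^2(\theta/2)) \in (-\infty,-1/4]$; the value $\lambda=1$ corresponds to $w=\infty$. Hence $K$ is a holomorphic bijection from $\D$ onto $\C\setminus[1/4,\infty)$, and the inverse of a holomorphic bijection is holomorphic, so $K$ is a biholomorphism.
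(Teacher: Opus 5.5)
Your proposal is correct. For parts (a)--(d) it follows the paper's proof essentially step by step:
\begin{itemize}
\item continuity comes from local frames of the tautological bundle together with continuity of the bracket;
\item invariance comes from $\ad^{\varphi\q}_{\varphi X}=\varphi\circ\ad^\q_X\circ\varphi^{-1}$;
\item $\tau$ vanishes on $[\I]$ because $\sl(2,\C)$ is simple (the paper says unimodular, which amounts to the same thing here);
\item $R$ is computed explicitly on the normal forms $\II_\lambda$ and $\III$. You actually carry out the $\III$ case, which the paper only calls analogous.
\end{itemize}

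The real difference is in (e). The paper simply cites Ahlfors for the fact that this Koebe-type function maps $\D$ biholomorphically onto the slit plane. You instead give a self-contained argument: the equation $w(1-\lambda)^2=\lambda$ has roots with product $1$. This makes the proof independent of the reference at the cost of a few lines.

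If you write it up, make two small points explicit:
\begin{itemize}
\item For $w=0$ the equation is not quadratic; it is linear, with unique solution $\lambda=0$.
\item Your argument as stated only shows that the image of $K$ contains $\C\setminus[1/4,\infty)$. To see that $K(\D)$ actually avoids $[1/4,\infty)$, you also need the converse: for real $w\le -1/4$, both roots lie on the unit circle. One way to see this is that $w\lambda^2-(2w+1)\lambda+w$ has real coefficients and discriminant $4w+1\le 0$, so its roots are complex conjugates with product $1$. Equivalently, note that $\theta\mapsto -1/(4\sin^2(\theta/2))$ maps onto $(-\infty,-1/4]$.
\end{itemize}
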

\begin{remark}
    % For $(\q,X) \in \CS|_{\Q(\g)}$, the characteristic polynomial of $\ad^\q_X:\q \rightarrow \q$ is given by $\det(t \Id_{\q} - \ad^\q_X) = t^3 - \tau(\q,X) t^2 + \delta(\q,X) t$.
    For $(\q,X) \in \CS|_{\Q(\g)}$, the quantities $\tau(\q,X),\delta(\q,X)$ can be recovered from the characteristic polynomial of $\ad^\q_X:\q \rightarrow \q$, which is 
    $$\det(t \Id_{\q} - \ad^\q_X) = t^3 - \tau(\q,X) t^2 + \delta(\q,X) t.$$
\end{remark}

\begin{proof}[Proof of Lemma \ref{lemma for v}]
    To show \eqref{lv1}, it suffices to show that $\Phi(s_1)s_2: \mathcal U \subseteq \Q(\g) \rightarrow \CS|_{\Q(\g)}$ is continuous for any continuous local sections $s_1,s_2: \mathcal U \rightarrow \CS|_{\Q(\g)}$. Let $X_1,X_2:\mathcal U \rightarrow \g^\C$ be the continuous maps such that $s_i(\q) = (\q, X_i(\q))$. Then $\Phi(s_1)s_2$ is equal to the following composition of continuous maps:
    $$\mathcal U \xrightarrow{\displaystyle \q \mapsto (\q, X_1(\q), X_2(\q))} \mathcal U \times \g^\C \times \g^\C \xrightarrow{\displaystyle (\q,X,Y) \mapsto (\q, [X,Y])} \mathcal U \times \g^\C.$$

    Let us show \eqref{lv2}. First, the continuity of $\tau$ and $\delta$ follows from the continuity of the trace $\tr:\End(\CS|_{\Q(\g)}) \rightarrow \C$ and square $(\cdot)^2:\End(\CS|_{\Q(\g)}) \rightarrow \End(\CS|_{\Q(\g)})$. The $\Aut_\R(\g)$-invariance of $\tau$ and $\delta$ follows from the equality
    $$\ad^{\varphi \q}_{\varphi X} = \varphi \circ \ad^{\q}_X \circ \varphi^{-1}$$
    for all $(\q,X) \in \CS|_{\Q(\g)}$ and $\varphi \in \Aut_\R(\g)$. Part \eqref{lv3} follows immediately from \eqref{lv2}.

    Let us show \eqref{lv4}. First, if $\q \in [\I]$, then $\q \cong \sl(2,\C)$ is unimodular, so $\tau(\q,X) = 0$ for all $X \in \q$. By $\Aut_\R(\g)$-invariance, it suffices to compute $R(\q,X)$ for $\q = \II_\lambda$ and $\q = \III$. For $\q = \II_\lambda$, let $e_1 = (H, \lambda H)$, $e_2 = (E,0)$, and $e_3 = (0,F)$. Write $X \in \q$ as $X = a e_1 + b e_2 + c e_3$, where $a,b,c \in \C$. In the ordered basis $(e_1,e_2,e_3)$, we have 
    $$\ad^{\II_\lambda}_X = \begin{pmatrix}
        0 & 0 &0 \\ -2b & 2a & 0 \\ 2c & 0 & - 2a \lambda
    \end{pmatrix}, \qquad \left(\ad^{\II_\lambda}_X\right)^2 = \begin{pmatrix}
        0 & 0 &0 \\ -4ab & 4a^2 & 0 \\ -4ac \lambda & 0 & 4 a^2 \lambda^2
    \end{pmatrix}.$$
    Thus, 
    $$R(\II_\lambda,X) = \frac{\delta(\II_\lambda,X)}{\tau(\II_\lambda,X)^2} =  \frac{\frac12 (4a^2 (1-\lambda)^2 - 4a^2 (1 + \lambda^2))}{4a^2 (1- \lambda)^2} = \frac{-\lambda}{(1-\lambda)^2}.$$
    The computation for $R(\III,X)$ is analogous.

    Finally, \eqref{lv5} follows from \cite[Chapter 7-4]{ahlfors}.
\end{proof}

\begin{proof}[Proof of Proposition \ref{prop topology}]
% \begin{proof}[Proof of Proposition \ref{prop topology}, assuming Lemmas \ref{lemma  grassmannian} and \ref{lemma for v}]
    For the real Lie algebra $\g = \sl(2,\C)$, consider $\Q(\g) \subseteq \Gr_3(\g^\C)$. Let $$p:\CS|_{\Q(\g)} \rightarrow \Q(\g) \qquad \text{and} \qquad  q:\St_3(\g^\C)|_{\Q(\g)} \rightarrow \Q(\g)$$ denote the restrictions of the tautological bundle and the Stiefel bundle of $\Gr_3(\g^\C)$ to $\Q(\g)$.

To show \eqref{ptop1}, observe that $\iota_\D$ is a composition of continuous maps:
 $$\iota_\D: \D \xhookrightarrow{\displaystyle \lambda \mapsto ((H, \lambda H), (E,0), (0,F))} \St_3(\g^\C)|_{\Q(\g)} \overset{q}{\twoheadrightarrow} \Q(\g) \overset{\rho}{\twoheadrightarrow} \M.$$
%\begin{align*}
%\iota_\D: \D &\xhookrightarrow{\hspace{1.5cm}} %\St_3(\g^\C)|_{\Q(\g)} \overset{q}{\twoheadrightarrow} \Q(\g) \overset{\rho}{\twoheadrightarrow} \M\\*
%\lambda &\mapsto ((H, \lambda H), (E,0), (0,F))
%\end{align*}

To show \eqref{ptop2}, consider the curve $\gamma:\R \rightarrow \M$ defined by the following composition of continuous maps:
$$\gamma: \R \xrightarrow{\displaystyle t \mapsto ((H,0), (E,0), (0,tH + F))}  \St_3(\g^\C)|_{\Q(\g)}\overset{q}{\twoheadrightarrow} \Q(\g) \overset{\rho}{\twoheadrightarrow} \M.$$
%\begin{align*}
%\gamma: \R &\xhookrightarrow{\hspace{2cm}} \St_3(\g^\C)|_{\Q(\g)} \overset{q}{\twoheadrightarrow} \Q(\g) \overset{\rho}{\twoheadrightarrow} \M\\*
%t &\mapsto ((H,0), (E,0), (0,tH + F))
%\end{align*}
Observe that $\det(t H + F) = 0$ if and only if $t = 0$. Thus, 
the argument in the proof of Proposition \ref{pCstr} \eqref{pCstr3} shows that $\gamma(0) = [\II_0] \in \M$ and $\gamma(t) = [\III]$ for $t \neq 0$. Therefore, every neighbourhood of $[\II_0]$ in $\M$ contains $[\III]$.

Let us show \eqref{ptop3}. The argument in the proof of Proposition \ref{pCstr} \eqref{pCstr1} implies that $\rho^{-1}([\I]) = \{\I, \flip (\I)\}$, where $\flip:\g^\C \rightarrow \g^\C$ is the automorphism  $(X,Y) \mapsto (Y,X)$. Thus, $\{[\I]\}$ is closed. Moreover, 
$$\{\I \} = \left\{ \q \in \Q(\g) : \rank(\pi_1|_{\q}) \geq 3\right\}, \qquad \{\flip (\I) \} = \left\{ \q \in \Q(\g) : \rank(\pi_2|_{\q}) \geq 3\right\}.$$
Thus, Lemma \ref{lemma  grassmannian} \eqref{lGr1} implies that $\{\I\}$ and $\{\flip (\I) \}$ are open in $\Q(\g)$, so $\{[\I]\}$ is also open.

Let us show \eqref{ptop4}. Because $\rho: \Q(\g) \rightarrow \M$ is an open quotient map, it suffices to find a non-empty open subset $\mathcal U$ of $\Q(\g)$ such that $\mathcal U \subseteq [\III]$. By \eqref{ptop3} and Lemma \ref{lemma  grassmannian} \eqref{lGr1}, the following set is open in $\Q(\g)$:
$$\mathcal D := \left\{\q \in \Q(\g) : \rank(\pi_1|_\q) = 2 \right\} = \left\{\q \in \Q(\g) : \rank(\pi_1|_\q) \geq  2 \right\} \backslash \{\I\}.$$
Now, recall that $\ker \pi_1 = 0 \oplus \sl(2,\C)$. By Lemma \ref{lemma  grassmannian} \eqref{lGr2}, the following map is continuous:
$$\eta: \mathcal D \rightarrow \P(\ker \pi_1), \qquad \q \mapsto \q \cap \ker \pi_1 = \ker(\pi_1|_\q).$$
The arguments in the proofs of Proposition \ref{pCstr}, parts \eqref{pCstr2} and \eqref{pCstr3}, show that 
$$\mathcal U := \eta^{-1}\left(\left\{ \C(0,Z) \in \P(\ker \pi_1) : \det Z \neq 0\right\}\right)$$
is a subset of the orbit $[\III]$. Moreover, $\mathcal U$ is open, because $\{(0,Z)  : \det Z \neq 0\}$ is open in $\ker \pi_1$.

Finally, let us show \eqref{ptop5}. Since $\rho: \Q(\g) \rightarrow \M$ is a quotient map, it suffices to show that $\widetilde \Lambda:= \Lambda \circ \rho: \Q(\g) \backslash [\I] \rightarrow \D$ is continuous. Fix $\q_0 \in \Q(\g) \backslash [\I]$, and let $s: \mathcal U \subseteq \Q(\g) \backslash[\I] \rightarrow \CS|_{\Q(\g)}$ be a continuous local section such that $s(\q_0) \in \mathcal O$ (this can be done because $\q_0$ is not unimodular). By replacing $\mathcal U$ with $s^{-1}(\mathcal O)$ if necessary, we may assume that the image of $s$ is contained in $\mathcal O$.  By Lemma \ref{lemma for v},   $\widetilde \Lambda|_{\mathcal U}$ can be written as a composition of continuous maps:
$$\mathcal U \xrightarrow{s} \mathcal O \xrightarrow{R} \C \backslash [1/4, \infty) \xrightarrow{K^{-1}} \D, $$
where $R$ and $K$ are defined in  Lemma \ref{lemma for v}.
\end{proof}

\section{Balanced and pluriclosed metrics}
\label{section_balanced_pluriclosed}
In this section, we prove the following theorem:
\begin{theorem}
    \label{theorem_balanced_pluriclosed}
    Let $\Gamma$ be a cocompact lattice of $\SL(2,\C)$ and let $J$ be a left-invariant complex structure. Then $(\Gamma \backslash \SL(2,\C), J)$ admits a balanced metric, but does not admit any pluriclosed metrics.
\end{theorem}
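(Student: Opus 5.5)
By Proposition \ref{prop_equivalences} and Theorem \ref{theorem classification}, it suffices to treat $J$ corresponding to $\I$, $\II_\lambda$ ($\lambda\in\D$), and $\III$. Indeed, an automorphism $F$ of $\SL(2,\C)$ maps $\Gamma$ to the cocompact lattice $F(\Gamma)$ and induces a biholomorphism $(\Gamma\backslash\SL(2,\C),J)\cong(F(\Gamma)\backslash\SL(2,\C),dF\circ J\circ dF^{-1})$. Since the statement is for every cocompact lattice, we may replace $J$ by any equivalent structure. Throughout I would work with left-invariant forms: a left-invariant Hermitian metric on $\g$ descends to $\Gamma\backslash\SL(2,\C)$, and the conditions $d\omega^2=0$ and $\partial\bar\partial\omega=0$ can be checked at the Lie algebra level via the Chevalley--Eilenberg differential.

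\textbf{Balanced metrics.} I would first try an explicit left-invariant balanced metric in each normal form. Take a basis $\{\zeta_1,\zeta_2,\zeta_3\}$ of $\q^*$, i.e.\ of the $(1,0)$-forms, dual to the bases in (\ref{complex structures list}). I would use $\omega=\tfrac{\i}{2}\sum_j\zeta_j\wedge\bar\zeta_j$, or a diagonal variant with positive coefficients. Then $*\omega=\omega^2/2$, so balancedness is equivalent to $d^*\omega=0$, i.e.\ $d\omega^2=0$, which reduces to finitely many structure-constant computations.

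There is also a uniform argument to fall back on. Since $\sl(2,\C)$ is unimodular, $d$ of any left-invariant $5$-form vanishes. In fact $\Lambda^5\g^*\cong\g\otimes\Lambda^6\g^*$, and $d\colon\Lambda^4\to\Lambda^5$ is dual to the bracket-type map $\Lambda^2\g\to\g$ twisted by the trace. I expect the better route to be a general criterion: for a unimodular Lie algebra with complex structure, a left-invariant balanced metric exists as soon as a suitable positive $(2,2)$-form is closed. Concretely, I would check that the $(2,2)$-form $\omega_0^2$ built from the Killing-form-type metric $g(X,Y)=\operatorname{Re}\tr(X Y^*)$, averaged appropriately, is closed.

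For $\I$ this is classical: the bi-invariant structure on a complex semisimple group is balanced with the Killing-type metric. For $\II_\lambda$ and $\III$ I would compute $d\zeta_j$ from the brackets. For example, in $\II_\lambda$ we have $[e_1,e_2]=2e_2$ and $[e_1,e_3]=-2\lambda e_3$, so $d\zeta_1$ involves only mixed terms coming from $[\q,\sigma\q]$. I would then choose diagonal coefficients $a_j>0$ so that the coefficients of $d\omega^2$ vanish. I expect this to yield linear equations in the $a_j$ with a positive solution.

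\textbf{Pluriclosed metrics.} For the nonexistence, I would use the standard symmetrisation trick (Belgun, Fino--Grantcharov--Vezzoni). Given any pluriclosed metric on the compact quotient, averaging $\omega$ over $\Gamma\backslash\SL(2,\C)$ with respect to the bi-invariant Haar measure produces a left-invariant pluriclosed metric, since $\partial\bar\partial$ commutes with the averaging. So it suffices to show that no left-invariant pluriclosed metric exists for each normal form.

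For this I would use the Fino--Vezzoni-type principle: a metric that is both balanced and pluriclosed is Kähler. A left-invariant Kähler metric on a unimodular non-abelian semisimple Lie algebra is impossible, since unimodular Kähler Lie algebras are flat (Hano). Alternatively, I would directly exhibit a left-invariant $(1,1)$-current obstruction: a closed positive... more concretely, a left-invariant $(2,2)$-form $\partial\bar\partial$-exact and positive, whose pairing with any pluriclosed $\omega$ must vanish, contradicting positivity. In practice I would compute $\partial\bar\partial(\zeta_j\wedge\bar\zeta_k)$ on the basis and show that $\partial\bar\partial\omega=0$ forces some diagonal coefficient $a_{jj}\le0$.

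I expect the main obstacle to be the explicit $d\omega^2=0$ verification for the non-regular $\III$, where the bracket $[\q,\sigma\q]$ is least symmetric, together with handling all $\lambda\in\D$ uniformly for $\II_\lambda$.
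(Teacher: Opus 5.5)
\textbf{Balanced metrics.} This half of your proposal is sound. The reduction to the normal forms $\I$, $\II_\lambda$, $\III$ is fine. The diagonal metric $\omega=\tfrac{\i}{2}\sum_j\zeta_j\wedge\overline{\zeta_j}$ is, up to a constant, exactly the metric the paper uses. It is balanced for every choice of positive diagonal coefficients, so no tuning of the $a_j$ is needed.

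The paper avoids computing $d\omega^2$ by quoting the Andrada--Villacampa criterion for unimodular Lie algebras, $\sum_j[e_j,\sigma e_j]=0$. Here this holds term by term. In each normal form, $e_j$ and $\sigma e_j$ either lie in different summands of $\sl(2,\C)\oplus\sl(2,\C)$, or, for $e_1=(H,\lambda H)$ in $\II_\lambda$, both lie in the abelian subalgebra $\C H\oplus\C H$.

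Your ``fall-back'' paragraph is not an argument. Asking for a closed positive $(2,2)$-form is just the definition of balanced. You have also not checked that $\operatorname{Re}\tr(XY^*)$ is $J$-Hermitian for $\II_\lambda$ or $\III$.

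\textbf{Pluriclosed metrics: the gap.} Your first route fails. The fact that a metric which is both balanced and pluriclosed is Kähler concerns a \emph{single} metric. You would instead be combining the balanced metric you built with a different, hypothetical pluriclosed one. The claim that a compact complex manifold carrying a balanced metric and some (other) pluriclosed metric must be Kähler is exactly the Fino--Vezzoni conjecture. That conjecture is open in general, and this theorem \emph{verifies} it for $\Gamma\backslash\SL(2,\C)$, so you cannot use it.

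Your alternative, a $\partial\overline\partial$-exact semi-positive $(2,2)$-form, is the right idea and is what the paper does. However, you omit the only non-formal step, namely producing that form. The paper takes $\eta=\varepsilon^3\wedge\overline{\varepsilon^3}$, where $\varepsilon^j$ is dual to the basis $e_1,e_2,e_3$ given by:
\begin{itemize}
\item for $\I$: $(H,0),(E,0),(F,0)$;
\item for $\II_\lambda$: $(H,\lambda H),(E,0),(0,F)$;
\item for $\III$: $(H,0),(E,0),(0,H+F)$.
\end{itemize}
It then computes $\partial\overline\partial\eta=c\,\varepsilon^1\wedge\overline{\varepsilon^1}\wedge\varepsilon^3\wedge\overline{\varepsilon^3}$, with $c=4$ for $\I$ and $\III$ and $c=4|1+\lambda|^2$ for $\II_\lambda$. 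This is nonzero precisely because $\lambda\in\D$ excludes $\lambda=-1$.

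Now let $\omega=\i g_{j\bar k}\varepsilon^j\wedge\overline{\varepsilon^k}$ be a left-invariant pluriclosed metric. Then $\partial\overline\partial\eta\wedge\omega=d(\overline\partial\eta\wedge\omega+\eta\wedge\partial\omega)$. This vanishes, since left-invariant exact top forms vanish on a unimodular group. But it also equals $\i c\,g_{2\bar 2}$ times a volume form, contradicting $g_{2\bar 2}>0$. A semi-positive obstruction form suffices, because all that is needed is that its wedge with every positive $(1,1)$-form is nonzero.

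Without such an explicit $\eta$, your non-existence argument is incomplete. The only other option is the full computation of $\partial\overline\partial$ on all of $\Lambda^{1,1}\g^*$, including the off-diagonal coefficients $g_{j\bar k}$, and you only announce that.
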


By the following well-known result, it suffices to consider \textit{left-invariant} balanced and pluriclosed metrics on $(\SL(2,\C),J)$ (see \cite[Theorem 2.1]{FG} and \cite[Proposition 3.6]{ugarte}):
% (see \cite[Section 2.2]{kwong} for a proof):
\begin{proposition}
    \label{proposition_symmetrisation}
    Let $\G$ be a real Lie group, let $\Gamma \leq \G$ be a cocompact lattice, and let $J$ be a left-invariant complex structure on $\G$. The following are equivalent:
    \begin{enumerate}[\normalfont (i)]
        \item $(\Gamma \backslash \G,J)$ admits a balanced/pluriclosed metric.
        \item $(\G,J)$ admits a left-invariant balanced/pluriclosed metric.
    \end{enumerate}
\end{proposition}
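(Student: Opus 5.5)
The direction (ii) $\Rightarrow$ (i) is immediate. A left-invariant Hermitian metric $g$ on $(\G,J)$ is invariant under left translation by $\Gamma$, so it descends to a Hermitian metric on $(\Gamma\backslash\G,J)$. The conditions $d\omega^{n-1}=0$ and $\partial\overline\partial\omega=0$ are local, so they are preserved under the covering $\G\to\Gamma\backslash\G$.

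For (i) $\Rightarrow$ (ii) I will use the symmetrisation (averaging) trick. Since $\G$ admits a lattice, $\G$ is unimodular. Hence $\Gamma\backslash\G$ carries a finite measure $\mu$, normalised to total mass $1$, that is invariant under the right action of $\G$. Let $\Lambda^k$ denote $k$-forms on $\Gamma\backslash\G$, viewed as $\Gamma$-invariant forms on $\G$. I define a symmetrisation map $\Lambda^k \to \Lambda^k(\g^*)$ by
$$\tilde\alpha(X_1,\ldots,X_k) := \int_{\Gamma\backslash\G} \alpha_x(X_1|_x,\ldots,X_k|_x)\, d\mu(x), \qquad X_i \in \g.$$
This is well defined because left-invariant vector fields on $\G$ are $\Gamma$-invariant and therefore descend to $\Gamma\backslash\G$.

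The key steps, in order, are as follows.
\begin{enumerate}[(1)]
\item The map $\alpha\mapsto\tilde\alpha$ is linear and fixes left-invariant forms.
\item It commutes with $J$. Since $J$ is left-invariant, it preserves bidegree $(p,q)$.
\item It commutes with $d$, and hence with $\partial$ and $\overline\partial$ after splitting by type.
\item It sends positive $(p,p)$-forms to positive $(p,p)$-forms. Positivity is a pointwise convex condition, so it survives averaging against the positive measure $\mu$.
\end{enumerate}

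Step (3) is the main obstacle. To prove it I will write $d\alpha$ on left-invariant fields by the invariant formula
$$d\alpha(X_0,\ldots,X_k)=\sum_i (-1)^i X_i\big(\alpha(\ldots\hat X_i\ldots)\big)+\sum_{i<j}(-1)^{i+j}\alpha([X_i,X_j],\ldots).$$
The bracket terms integrate directly to the corresponding terms of $d\tilde\alpha$, since $[X_i,X_j]\in\g$. The derivative terms vanish after integration, because $\int_{\Gamma\backslash\G} X(f)\,d\mu=0$ for every smooth $f$ and every $X\in\g$. This holds because the flow of the left-invariant field $X$ is right translation by $\exp(tX)$, and $\mu$ is invariant under right translations. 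Differentiating $\int f(x\exp tX)\,d\mu=\int f\,d\mu$ at $t=0$ gives the claim, and here unimodularity is essential.

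With these properties, the pluriclosed case is direct. If $\omega$ is a pluriclosed Hermitian form on $\Gamma\backslash\G$, then $\tilde\omega$ is a left-invariant positive $(1,1)$-form. By step (3), $\partial\overline\partial\tilde\omega=\widetilde{\partial\overline\partial\omega}=0$.

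The balanced condition is not linear in $\omega$, so I will instead average $\Omega:=\omega^{n-1}$. This gives a left-invariant, strictly positive, $d$-closed $(n-1,n-1)$-form $\tilde\Omega$. I then invoke Michelsohn's observation: every strictly positive $(n-1,n-1)$-form equals $\omega'^{\,n-1}$ for a unique positive $(1,1)$-form $\omega'$. This is a pointwise linear-algebra fact, and its uniqueness forces $\omega'$ to be left-invariant. Thus $\omega'$ defines a left-invariant balanced metric on $(\G,J)$.
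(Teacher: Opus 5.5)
Your proposal is correct and is essentially the standard argument behind the results the paper cites instead of giving a proof (Fino--Grantcharov, Ugarte). That argument uses Belgun's symmetrisation map, which commutes with $J$, $d$, $\partial$, $\overline\partial$ and preserves positivity. It is applied directly to $\omega$ in the pluriclosed case, and to $\omega^{n-1}$ together with Michelsohn's $(n-1)$-st root lemma in the balanced case.
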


The following lemma provides an obstruction to the existence of pluriclosed metrics:

\begin{lemma}
    \label{lem_pluriclosed_obstruction}
    Let $\G^{2n}$ be a unimodular Lie group, let $J$ be a left-invariant complex structure on $\G$, and let $\varepsilon^1,\ldots, \varepsilon^n$ be a basis for $\Lambda^{1,0} \g^*$. Suppose there exists $\eta \in \Lambda^{n-2,n-2} \g^*$ such that 
    $$\partial \overline \partial \eta = \varepsilon^1 \wedge \overline{\varepsilon^1} \wedge \cdots \wedge \varepsilon^{n-1} \wedge \overline{\varepsilon^{n-1}}.$$
    Then $(\G,J)$ does not admit any left-invariant pluriclosed metrics.
\end{lemma}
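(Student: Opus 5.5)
The plan is to argue by contradiction, pairing a hypothetical pluriclosed form against $\eta$ by Stokes' theorem. Unimodularity plays the role that compactness plays in the usual argument. Suppose $g$ is a left-invariant pluriclosed metric with fundamental form $\omega \in \Lambda^{1,1}\g^*$, so $\partial\overline\partial\omega = 0$.

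The first step is a left-invariant version of Stokes' theorem. Since $\G$ is unimodular, for any left-invariant $(2n-1)$-form $\alpha$ we have $d\alpha = 0$ in $\Lambda^{2n}\g^*$. Indeed, $d\alpha$ is a multiple of the volume form. Writing $\alpha = \iota_X \mathrm{vol}$ for some $X \in \g$, the Chevalley--Eilenberg differential gives $d(\iota_X \mathrm{vol}) = \pm \tr(\ad_X)\,\mathrm{vol} = 0$. Splitting into bidegrees, the same holds for $\partial\alpha$ and $\overline\partial\alpha$ on forms of the appropriate type.

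Next, integrate by parts twice in top degree:
$$\omega \wedge \partial\overline\partial\eta = \partial\overline\partial\omega \wedge \eta \qquad \text{in } \Lambda^{n,n}\g^*.$$
To see this, expand $\partial(\omega\wedge\overline\partial\eta)$ and $\overline\partial(\partial\omega\wedge\eta)$ using the Leibniz rule; both are exact top forms, hence zero by the first step. This moves $\partial\overline\partial$ from $\eta$ onto $\omega$, up to a sign that I will track carefully. The sign is harmless, because only vanishing matters: the right-hand side is zero since $\omega$ is pluriclosed.

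On the other hand, $\omega \wedge \partial\overline\partial\eta = \omega\wedge \varepsilon^1\wedge\overline{\varepsilon^1}\wedge\cdots\wedge\varepsilon^{n-1}\wedge\overline{\varepsilon^{n-1}}$. Write $\omega = \i\sum h_{j\bar k}\,\varepsilon^j\wedge\overline{\varepsilon^k}$ with $(h_{j\bar k})$ positive definite Hermitian. Only the term $\i h_{n\bar n}\,\varepsilon^n\wedge\overline{\varepsilon^n}$ survives the wedge product. It gives a nonzero multiple of $h_{n\bar n}\,\varepsilon^1\wedge\overline{\varepsilon^1}\wedge\cdots\wedge\varepsilon^n\wedge\overline{\varepsilon^n}$, which is a nonzero volume form since $h_{n\bar n} > 0$. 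This contradicts the vanishing above.

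The main obstacle is the first step: checking that unimodularity kills $d$ on all left-invariant $(2n-1)$-forms, and that this passes to $\partial$ and $\overline\partial$ separately. The bidegree splitting works because $J$ is integrable, so $d = \partial + \overline\partial$, and on an $(n-1,n)$- or $(n,n-1)$-form only one component can be nonzero.
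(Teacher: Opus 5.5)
Your proposal is correct and follows essentially the same route as the paper. Unimodularity kills $d$ on left-invariant $(2n-1)$-forms. Applying the Leibniz rule to $\omega\wedge\overline\partial\eta$ and $\partial\omega\wedge\eta$ (whose sum is exactly the form $\overline\partial\eta\wedge\omega+\eta\wedge\partial\omega$ that the paper differentiates) gives $\omega\wedge\partial\overline\partial\eta=\partial\overline\partial\omega\wedge\eta=0$. Finally, only the $\varepsilon^n\wedge\overline{\varepsilon^n}$ term of $\omega$ survives, producing a nonzero top form.

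The only cosmetic difference is in the first step. You justify it via $d(\iota_X\mathrm{vol})=\pm\tr(\ad_X)\,\mathrm{vol}$, whereas the paper writes $d(\iota_X\mu)=\mathcal L_X\mu$ and uses the right-invariance of a bi-invariant volume form $\mu$.
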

\begin{proof}
    First, let us show that unimodularity implies that every left-invariant exact top form is zero, i.e. $d(\Lambda^{2n-1} \g^*) = 0$. By unimodularity, there exists a non-zero bi-invariant top form $\mu \in \Lambda^{2n} \g^*$. Since $\mu$ is non-zero, the map $\g \rightarrow \Lambda{2n-1}\g^*$ given by $X \mapsto \iota_X \mu$ is a linear isomorphism, where $\iota$ denotes the interior product. Thus, every element of $\Lambda^{2n-1} \g^*$ is of the form $\iota_X \mu$ for some $X \in \g$. We find 
    $$d(\iota_X \mu) = \mathcal L_X \mu = \frac{d}{dt}\bigg|_{t = 0} (R_{\exp(tX)})^* \mu = 0,$$
    where $\mathcal L$ is the Lie derivative, and $R_{g}:\G \rightarrow \G$ denotes right multiplication by $g \in \G$.

    Now, for the sake of contradiction, suppose that $\omega \in \Lambda^{1,1} \g^*$ is the fundamental form of a left-invariant pluriclosed metric on $(\G,J)$. Write $\omega = \i g_{i\overline j} \; \varepsilon^i \wedge \overline{\varepsilon^j}$. Observe that
    \begin{align*}
        0 = d\left( \overline \partial \eta \wedge \omega + \eta \wedge \partial \omega \right) &= \partial \left(  \overline \partial \eta \wedge \omega\right) + \underbrace{\overline \partial \left(  \overline \partial \eta \wedge \omega\right)}_{= 0 \text{ by type}} + 
        \underbrace{\partial \left( \eta \wedge \partial \omega\right)}_{ = 0 \text{ by type}}+\overline \partial \left( \eta \wedge \partial \omega\right) \\
        &= \partial \overline \partial \eta \wedge \omega - \overline \partial \eta \wedge \partial \omega + \overline \partial \eta \wedge \partial \omega + \eta \wedge \underbrace{ \overline \partial \partial \omega }_{= 0} \\
        &= \i g_{n \overline n} \;\varepsilon^1 \wedge \overline{\varepsilon^1} \wedge \cdots \wedge \varepsilon^{n} \wedge \overline{\varepsilon^{n}} \neq 0,
    \end{align*}
    a contradiction.
\end{proof}

\begin{proof}[Proof of Theorem \ref{theorem_balanced_pluriclosed}]Let $\q$ denote the $\i$-eigenspace of $J$. By Theorem \ref{theorem classification}, it suffices to consider the cases when $\q$ is equal to $\I$, $\II_\lambda$, or $\III$. Let $e_1, e_2,e_3 \in \q$ denote the following vectors:
    \begin{center}
        \begin{tabular}{c|ccc}
            $\q$ & $e_1$ & $e_2$ & $e_3$ \\ \hline
            $\I$ & $(H,0)$ & $(E,0)$ & $(F,0)$ \\ 
            $\II_\lambda$ & $(H,\lambda H)$ & $(E,0)$ & $(0,F)$ \\ 
            $\III$ & $(H,0)$ & $(E,0)$ & $(0, H + F)$
        \end{tabular}
    \end{center}
    Let $\varepsilon^1, \varepsilon^2, \varepsilon^3 \in \Lambda^{1,0} \g^*$ denote the dual basis of $e_1,e_2,e_3$.
    In each case, we find that $\sum[e_i, \sigma e_i] = 0$.  Thus, \cite[Lemma 1]{AndradaV} implies that $\omega = \i \sum \varepsilon^i \wedge \overline{\varepsilon^i}$ is balanced.

    Next, straightforward computations show that 
    $$\partial \overline \partial (\varepsilon^3 \wedge \overline{\varepsilon^3}) = \begin{cases}
        4 \; \varepsilon^1 \wedge \overline{\varepsilon^1} \wedge \varepsilon^3 \wedge \overline{\varepsilon^3} & \text{if $\q = \I$,} \\
        4 |1 + \lambda|^2\; \varepsilon^1 \wedge \overline{\varepsilon^1} \wedge \varepsilon^3 \wedge \overline{\varepsilon^3} & \text{if $\q = \II_\lambda$,} \\
        4 \; \varepsilon^1 \wedge \overline{\varepsilon^1} \wedge \varepsilon^3 \wedge \overline{\varepsilon^3} & \text{if $\q = \III$.}
    \end{cases}$$
    Therefore, Lemma \ref{lem_pluriclosed_obstruction} implies that $(\SL(2,\C), J)$ does not admit left-invariant pluriclosed metrics.
\end{proof}

\bibliographystyle{amsalpha} 
\bibliography{references}
\end{document}